\documentclass{book}
\usepackage{amsopn,amstext,amsbsy,amsmath,amscd,amsthm,amsfonts}
\usepackage[ma,mnf,master]{mccover} 
\usepackage{geometry} 
\geometry{verbose,a4paper,tmargin=3cm,bmargin=3cm,lmargin=3cm,rmargin=3cm} 
\usepackage[latin9]{inputenc} 
\usepackage[all]{xy} 

\usepackage{hyperref}

\usepackage{amssymb}
\usepackage[toc,page]{appendix}

\usepackage{amsmath}
\usepackage[makeroom]{cancel}
\usepackage{array}

\newcolumntype{L}[1]{>{\raggedright\arraybackslash}p{#1}}
 
\newcolumntype{C}[1]{>{\centering\arraybackslash}p{#1}}
 
\newcolumntype{R}[1]{>{\raggedleft\arraybackslash}p{#1}}

\pagestyle{plain} 

\theoremstyle{plain}
\newtheorem{theorem}                 {\bf Theorem}      [chapter]

\newtheorem{lemma}        [theorem]  {Lemma}
\newtheorem{proposition}  [theorem]  {Proposition}

\theoremstyle{definition}

\newtheorem{definition}   [theorem]  {Definition}

\newtheorem{example}      [theorem]  {Example}

\newtheorem{remark}       [theorem]  {Remark}

\numberwithin{equation}{chapter}

\allowdisplaybreaks

\def \rn{{\mathbb R}}

\def \A{\mathcal A}

\def \F{\mathcal F}

\def \H{\mathcal H}
\def \I{\mathcal I}

\def \V{\mathcal V}


\def\nab#1#2{\hbox{$\nabla$\kern -.3em\lower 1.0 ex
		\hbox{$#1$}\kern -.1 em {$#2$}}}


\def \lb#1#2{[#1,#2]}


\def \g{\mathfrak{g}}

\def \k{\mathfrak{k}}

\def \m{\mathfrak{m}}

\def \SL2{\widetilde{\text{\bf SL}}_{2}(\rn)}

\def \SU#1{\text{\bf SU}(#1)}


\DeclareMathOperator{\Div}{div}

\DeclareMathOperator{\grad}{grad}

\DeclareMathOperator{\trace}{trace}

\newcommand{\R}{\mathbb{R}}
\newcommand{\C}{\mathbb{C}}

\newcommand{\K}{\mathcal{K}}

\newcommand{\id}{\operatorname{id}}

\allowdisplaybreaks

\maintitle{Natural Almost Hermitian Structures \\ on Conformally Foliated 4-Dimensional \\ Lie
Groups with Minimal Leaves}
\authors{Emma Andersdotter Svensson}
\issnum{2022}{5}
\sernum{LUNFMA}{3128}{2022}

\newif\ifDemo
\Demofalse

\newif\ifNotesForLater
\NotesForLaterfalse

\begin{document}

\def\nab#1#2#3{\nabla^{\hbox{$\scriptstyle{#1}$}}_{\hbox{$\scriptstyle{#2}$}}{\hbox{$#3$}}}

\def\snab#1#2#3{\hbox{$\nabla$\kern-.1em\raise 1.2 ex\hbox{$\scriptstyle{#1}$}\kern-.5em\lower 0.8 ex\hbox{$#2$}\kern-.0em{$#3$}}}

\def\nnab#1{\nabla^{\hbox{$\scriptstyle{#1}$}}}
\def\nsnab#1{\hbox{$\nabla$\kern-.1em\raise 1.0 ex\hbox{$\scriptstyle{#1}$}}}

\def\rn{\mathbb R}
\def\ci{\mathcal I}
\def\g{\mathfrak{g}}
\def\tr{\textrm{\upshape{trace}}}
\def\pdt{\frac{\partial}{\partial t}}


\large 

\frontcover 

\thispagestyle{empty}
\centerline {\bf\Large Abstract}
\vskip1cm
Let $(G,g)$ be a 4-dimensional Riemannian Lie group with a  2-dimensional left-invariant, conformal foliation $\F$ with minimal leaves. Let $J$ be an almost Hermitian structure on $G$ adapted to the foliation $\F$.  The corresponding Lie algebra $\g$ must then belong to one of 20 families $\g_1,\dots,\g_{20}$ according to \cite{Gud-Sve-6}. We classify such structures $J$ which are almost K\"{a}hler $(\A\K)$, integrable $(\I)$ or K\"{a}hler $(\K)$. Hereby, we construct 16 multi-dimensional almost K\"{a}hler families, 18 integrable families and 11 K\"{a}hler families.

\vskip13cm
{\it Throughout this work it has been my firm intention to give reference to the stated results and credit to the work of others. All theorems, propositions, lemmas and examples left unmarked are either assumed to be too well known for a reference to be given or the fruits of my own efforts.}


\newpage 
\thispagestyle{empty}
\phantom{m}

\newpage 
\thispagestyle{empty}
\centerline {\bf\Large Acknowledgments}
\vskip1cm
I would like to thank my supervisor Sigmundur Gudmundsson for helping me find the project and guiding me throughout the process.
\vskip1pc
\hskip9cm Emma Andersdotter Svensson
\phantom{m}

\newpage 
\thispagestyle{empty}
\phantom{m}
\newpage
\thispagestyle{empty}
\centerline {\bf\Large Popular Scientific Summary}
\vskip1cm
In any scientific field, it is important to classify certain objects in order to form a greater understanding. Here, this is done for so-called 4-dimensional Lie groups with some additional specifications. The theory of Lie algebras and Lie groups is connected to almost all topics in mathematics and is also widely used in physics. For instance, gauge theory in physics uses certain symmetries of a physical system to form a Lie group. It is through the use of gauge theory that the existence of certain particles has been postulated, which points towards a greater understanding of our universe.

Throughout this work, we use a field in mathematics called Riemannian geometry. Euclidean geometry is what most people have been using when working with elementary geometry. 
Here, lines are assumed to be straight in the sense that two parallel lines will never meet, no matter how long you make them. It is, however, sometimes useful to work with geometry where lines can be assumed to not be straight. According to general relativity, for example, the path of a light particle bends when it is in the presence of a massive body. Such geometry is referred to as non-Euclidean geometry. Riemannian geometry was developed in the 19th century by Bernhard Riemann. In 1854, he presented his ideas in a lecture hall at his university in G\"{o}ttingen. Among the audience was his former teacher Carl Friedrich Gauss. With his new theory, Riemann provided a system to unite all non-Euclidean geometries.

Riemannian geometry makes it possible to define a so-called manifold. A manifold is a space that up-close looks Euclidean. For instance, if a person stands on a sphere as big as the Earth and looks around, they appear to be standing on a flat plane. A Lie group is defined as a manifold that also has the properties of a group. A mathematical group is an arbitrary set paired with an operation (for example addition or multiplication) that is associative, has an identity element and any element must have an inverse.
For example, the set of whole numbers paired with addition is a group. Any Lie group has an associated Lie algebra, which is the associated Euclidean space at a point of the Lie group. In the case of a person standing on a sphere of the size of the Earth, imagine that the person makes the perceived plane at the point at which they are standing infinitely larger.

The 4-dimensional Lie group studied in this thesis is also a so-called almost Hermitian manifold, which can be seen as a generalisation of a complex space. With the specifications that will be given to this Lie group, it has been shown by S. Gudmundsson and M. Svensson that its corresponding Lie algebras can be divided into 20 families. We use the work of A. Gray and L. Hervella to show that any 4-dimensional almost Hermitian manifold must belong to one of four classes and determine when each of the 20 families belongs to the four classes.

\newpage 
\thispagestyle{empty}

\newpage 
\tableofcontents
\thispagestyle{empty}
\phantom{m}

\newpage 
\setcounter{page}{0} 
\thispagestyle{empty}

\chapter{Introduction}
The reader is assumed to have a fundamental knowledge of Riemannian geometry corresponding to that of \cite{Gud-Riemann}.

Let $(G,g)$ be a 4-dimensional Lie group with a 2-dimensional foliation $\F$ which is minimal, conformal and left-invariant. Such a group carries a natural almost complex structure $J$. Denote by $\g$ the corresponding Lie algebra of $G$. It was proven by S. Gudmundsson and M. Svensson that $\g$ must belong to one of 20 families $\g_1,\dots ,\g_{20}$, see \cite{Gud-Sve-6}. The aim of this Master's thesis is to determine when the structure $J$ is integrable ($\mathcal{I}$), almost K\"{a}hler ($\mathcal{AK}$) or K\"{a}hler ($\mathcal{K}$). 
The results are new and summarized in Table \ref{tab:J1}.

In Chapter \ref{background}, we introduce some useful theory about so-called harmonic morphisms and foliations.

In Chapter \ref{4dimliegr}, we introduce 4-dimensional Lie groups $(G,g)$ which are equipped with a minimal, conformal and left-invariant foliation $\F$ of dimension 2. These Lie groups correspond to those in the article \cite{Gud-Sve-6}. We show that the corresponding Lie algebras $\g$ of $(G,g)$ must have the Lie bracket relations given in Equation \eqref{LieBraRela}, as stated in the original article \cite{Gud-Sve-6}.

The theory of almost Hermitian manifolds is introduced in Chapter \ref{AlmostHermitianManifolds}, where we go over concepts like (almost) complex structures, Hermitian manifolds and K\"{a}hler manifolds, which will be useful later on.

Chapter \ref{SpaceW} is based on \cite{Gra-Her} and introduces the space $W$ consisting of covariant derivatives of the K\"{a}hler form. We show that an almost complex structure $J$ on a 4-dimensional almost Hermitian manifold must be either almost Hermitian ($\mathcal{W}$), integrable $(\I)$, almost K\"{a}hler $(\mathcal{AK})$ or K\"{a}hler $(\K)$.

In Chapter \ref{W2W4} we apply this to the 4-dimensional Lie groups introduced in Chapter \ref{4dimliegr} and determine when they are almost K\"{a}hler, integrable or K\"{a}hler. 

The main work takes place in Chapter \ref{23families}, where we look at the 20 families of Lie algebras found in \cite{Gud-Sve-6} and construct new examples in each family that are either almost K\"{a}hler $(\mathcal{AK})$, integrable $(\I)$ or K\"{a}hler $(\K)$.

\chapter{Harmonic Morphisms and Foliations}\label{background}
In this chapter, we go over some useful theory of harmonic morphisms and foliations. We follow \cite{Fug}.

\section{Harmonic Morphisms}
In 1848, Jacobi published a paper where he introduced the idea of harmonic morphisms in the 3-dimensional Euclidean space \cite{Jac}. A harmonic morphism is, broadly speaking, a map preserving Laplace's equation. His ideas were then generalized to maps between Riemannian manifolds. In this section, we introduce harmonic morphisms in the case of Riemannian manifolds. We will, occasionally, draw parallels to the work done by Jacobi.

Given a two times differentiable function $f$ defined on an open subset of the Euclidean space $\R^m$, the \textit{Laplacian} $\Delta$ of $f$ is given by
$$
\Delta f=\frac{\partial^2f}{\partial x_1^2}+\dots+\frac{\partial^2f}{\partial x_m^2}.
$$
Notice that $\Delta f=\Div\grad f$. Laplace's equation in $\R^m$ is given by $\Delta f=0$. The solutions of Laplace's equation are called \textit{harmonic functions}. In Definition \ref{LaplacianDfn} we define harmonic functions on a general Riemannian manifold, similarly to how it is done on $\R^m$. To do this, we must first introduce some additional definitions.

\begin{definition}\label{DivGrad}
Let $f$ be a smooth real-valued function on the smooth Riemannian manifold $(M,g)$. Then the \textit{gradient} of $f$ is a vector field given by
\begin{equation*}
    g(\grad{f},Y)=\mathrm{d}f(Y),
\end{equation*}
where $\mathrm{d}f(Y)$ is the differential of $f$ and $Y$ is any vector field on $M$.

Let $E$ be an arbitrary vector field. Then the \textit{divergence} of $E$ is given by
\begin{equation*}
    \Div{E}=\trace{\nabla E},
\end{equation*}
where $\nabla$ is the Levi-Civita connection on $(M,g)$.
\end{definition}
We are now ready to introduce the Laplace-Beltrami operator and define what is meant by a harmonic function on a Riemannian manifold.
\begin{definition}\label{LaplacianDfn}
Let $(M,g)$ be a Riemannian manifold. The \textit{Laplace-Beltrami operator} $\tau$ of any function $f:U\subset M\to\R$ of class $C^2$  is defined by
\begin{equation*}
    \tau(f)=\Div\grad{f}
    =\Div\mathrm{d}f,
\end{equation*}
where the divergence and gradient are given in Definition \ref{DivGrad}. \textit{Laplace's equation} is given by $\tau(f)=0$ and its solutions are called \textit{harmonic functions}. The operator $\tau$ is also called the \textit{tension field}.
\end{definition}

The tension field for a smooth map $\phi:(M,g)\to(N,h)$ is similarly defined by
\begin{equation*}
    \tau(\phi)
    =\Div\mathrm{d}\phi,
\end{equation*}
where $\mathrm{d}\phi$ is the differential of $\phi$. The map $\phi$ is harmonic if and only if $\tau(\phi)=0$, see \cite[Theorem 3.3.3]{Fug}.

We now define what is meant by a harmonic morphism between Riemannian manifolds.
\begin{definition}
Let $(M,g)$ and $(N,h)$ be two Riemannian manifolds. A smooth mapping $\phi:M\rightarrow N$ is called a \textit{harmonic morphism} if, for any harmonic function $f:U\rightarrow\R$ on an open subset $U$ of $N$ such that $\phi^{-1}(U)$ is non-empty, the composition $f\circ \phi$ is harmonic on $\phi^{-1}(U)$.
\end{definition}

A complex-valued function $\phi$ on $\R^3$ given by $\phi=u+i\,v$, where $u$ and $v$ are real-valued functions on $\R^3$, is called \textit{horizontally (weakly) conformal} if and only if
\begin{equation}\label{HorConInC}
|\nabla u|=|\nabla v|\ \ \textrm{and} \ \ \langle\nabla u, \nabla v\rangle=0.    \end{equation}
Jacobi showed in his article from 1848 that a complex-valued function on $\R^3$ of class $C^2$ is a harmonic morphism if and only if it is harmonic and horizontally conformal. The result was independently generalized by Fuglede in 1978 \cite{Fug} and by Ishihara in 1979 \cite{Ishihara} to Riemannian manifolds. We present this in Theorem \ref{FugIsh} below.

We now give an example of how the theorem by Jacobi shows that the usual holomorphic and anti-holomorphic functions in complex analysis are harmonic morphisms.
\begin{example}
Let $f:U\subset \C\rightarrow\C$ be a function of class $C^2$ such that $f=u+i\,v$, where $u,v:U\rightarrow\R$. Then $f$ is  horizontally weakly conformal if and only if
$$\langle\nabla{u},\nabla{v}\rangle=0 \ \ \textrm{and} \ \ |\nabla{u}|^2=|\nabla{v}|^2.$$
That is,
\begin{equation*}
    0=\langle\nabla{u},\nabla{v}\rangle
    =
    \frac{\partial u}{\partial x}\frac{\partial v}{\partial x}
    +
    \frac{\partial u}{\partial y}\frac{\partial v}{\partial y}
\end{equation*}
and
\begin{equation*}
    0=|\nabla{u}|^2-|\nabla{v}|^2=\left(\frac{\partial u}{\partial x}\right)^2+\left(\frac{\partial u}{\partial y}\right)^2
    -
    \left(\frac{\partial v}{\partial x}\right)^2-\left(\frac{\partial v}{\partial y}\right)^2.
\end{equation*}
This is true if and only if the function $f$ or its conjugate $\Bar{f}$ satisfy the classical \textit{Cauchy-Riemann equations}:
$$\left(\frac{\partial u}{\partial x},\frac{\partial u}{\partial y}
\right)=\pm\left(\frac{\partial v}{\partial y},-\frac{\partial v}{\partial x}\right).
$$
Thus $f$ is also \textit{holomorphic} or \textit{anti-holomorphic}. Notice that, if this is true, then
\begin{equation*}
    \Delta u=\frac{\partial^2 u}{\partial x^2}+\frac{\partial^2 u}{\partial y^2}
    =\pm
    \left(\frac{\partial}{\partial x}\frac{\partial v}{\partial y}-\frac{\partial}{\partial y}\frac{\partial v}{\partial x}\right)=0
\end{equation*}
and
\begin{equation*}
    \Delta v=\frac{\partial^2 v}{\partial x^2}+\frac{\partial^2 v}{\partial y^2}=
    \pm\left(-\frac{\partial}{\partial x}\frac{\partial u}{\partial y}+\frac{\partial}{\partial y}\frac{\partial u}{\partial x}\right)=0,
\end{equation*}
giving $\Delta f=\Delta u+i\Delta v=0$. Thus $f$ is also \textit{harmonic}. It follows that the function $f$ is a harmonic morphism.
\end{example}

Let us remind the reader what is meant by a (weakly) conformal map between Riemannian manifolds.

\begin{definition}
Given two Riemannian manifolds $(M,g)$ and $(N,h)$, a smooth map $\phi:M\to N$ is said to be (weakly) conformal at a point $p\in M$ if there exists a real non-negative number $\lambda(p)$ so that
\begin{equation*}
    h(\mathrm{d}\phi_p(E),\mathrm{d}\phi_p(F))=\lambda^2(p)\,g(E,F)
\end{equation*}
for each $E,F\in T_pM$. The number $\lambda(p)$ is called the \textit{conformality factor}. If the above holds for all points $p\in M$, the map $\phi$ is called \textit{(weakly) conformal}.
\end{definition}

We will now define what is meant by a horizontally weakly conformal map in the case of Riemannian manifolds. Let $\phi:(M^m,g)\to(N^n,h)$ be a smooth map between Riemannian manifolds and let $p\in M$ be any point in $M$. Then the \textit{vertical space} $\V_p$ of $\phi$ at $p$ is the kernel of $\mathrm{d}\phi_p$ and the \textit{horizontal space} $\H_p$ of $\phi$ at $p$ is the orthogonal complement of $\V_p$.

\begin{definition}\label{HorConDfn}
Let $\phi:(M^m,g)\to(N^n,h)$ be a smooth map between Riemannian manifolds. The map $\phi$ is then said to be \textit{horizontally weakly conformal at a point $p\in M$} if
\begin{itemize}
    \item[(i)] $\mathrm{d}\phi_p=0$, or
    \item[(ii)] the differential $\mathrm{d}\phi_p$ is surjective and there exists a positive real number $\lambda(p)$ such that
    $$h(\mathrm{d}\phi_p(X),\mathrm{d}\phi_p(Y))=\lambda^2(p)\,g(X,Y),
    $$
    for any $X,Y\in\H_p$.
\end{itemize}
The map $\phi$ is called \textit{horizontally weakly conformal} if the above holds for all points in $M$.
\end{definition}

\begin{definition}\label{DfnSubmersion}
For a smooth map $\phi:(M^m,g)\to (N^n,h)$ between Riemannian manifolds, the points $p\in M$ at which $\mathrm{d}\phi_p=0$ are called \textit{critical points}. The points $q\in M$ where the restriction $\mathrm{d}\phi_q|_{\H_q}$ is surjective and conformal are called \textit{regular points}. $\phi$ is called a \textit{conformal submersion} if it has no critical points and is horizontally weaky conformal at every point in $M$.
\end{definition}

\begin{remark}
Definition \ref{HorConDfn} can be written more compactly if, given a smooth map $\phi:(M^m,g)\to(N^n,h)$ between Riemannian manifolds, we define a set $C_\phi$ by
$$C_\phi=\{p\in M\,|\, \mathrm{d}\phi_p=0\}$$
called the \textit{critical set}. The map $\phi$ is then horizontally weakly conformal if for each $p\in M\setminus C_\phi$, the restriction $\mathrm{d}\phi_p|_{\mathcal{H}_p}$ is surjective and conformal.
\end{remark}

The following result was proved by Fuglede in 1978 \cite{Fug} and by Ishihara in 1979 \cite{Ishihara}, independently of each other.

\begin{theorem}[\cite{Fug}, \cite{Ishihara}]\label{FugIsh}
A map $\phi:(M,g)\rightarrow(N,h)$ of class $C^2$ between Riemannian manifolds is a harmonic morphism if and only if it is harmonic and horizontally weakly conformal.
\end{theorem}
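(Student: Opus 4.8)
The plan is to reduce everything to the composition formula for the tension field. For a $C^2$ map $\phi:(M,g)\to(N,h)$ and a $C^2$ function $f$ on $N$, and a local orthonormal frame $\{e_\alpha\}$ on $M$, one has
$$\tau(f\circ\phi)=\sum_{\alpha}(\nabla\mathrm{d}f)\bigl(\mathrm{d}\phi(e_\alpha),\mathrm{d}\phi(e_\alpha)\bigr)+\mathrm{d}f(\tau(\phi)),$$
where $\nabla\mathrm{d}f$ is the Hessian of $f$. Fix $p_0\in M$ with image $q_0=\phi(p_0)$ and set $H:=(\nabla\mathrm{d}f)_{q_0}$ and $P:=\sum_\alpha\mathrm{d}\phi(e_\alpha)\otimes\mathrm{d}\phi(e_\alpha)$, a positive semidefinite element of $S^2T_{q_0}N$ with $P^{jk}=g^{\alpha\beta}\,\partial_\alpha\phi^j\,\partial_\beta\phi^k$. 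The evaluated identity then reads $\tau(f\circ\phi)(p_0)=\langle H,P\rangle+\mathrm{d}f_{q_0}(\tau(\phi)(p_0))$, with the single constraint $\operatorname{trace}_h H=\Delta_N f(q_0)$ coming from the definition of $\nabla\mathrm{d}f$. I would exploit this identity by feeding in carefully chosen test functions $f$.

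For the easy direction, assume $\phi$ is harmonic and horizontally weakly conformal. Choosing a local orthonormal frame adapted to the splitting $TM=\mathcal H\oplus\mathcal V$, horizontal weak conformality gives $\mathrm{d}\phi(e_a)=\lambda\,\varepsilon_a$ for an orthonormal frame $\{\varepsilon_a\}$ on $N$ when $e_a$ is horizontal, and $\mathrm{d}\phi(e_a)=0$ when $e_a$ is vertical. Hence $P=\lambda^2\,h^{-1}$ and $\langle H,P\rangle=\lambda^2\operatorname{trace}_h H=\lambda^2\,\Delta_N f$. Since $\tau(\phi)=0$, the identity collapses to $\tau(f\circ\phi)=\lambda^2\,\Delta_N f$, so every harmonic $f$ pulls back to a harmonic $f\circ\phi$, i.e. $\phi$ is a harmonic morphism.

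For the converse, assume $\phi$ is a harmonic morphism, so the evaluated identity vanishes for every locally harmonic $f$. The whole argument then hinges on an existence lemma: around any $q_0\in N$, and for prescribed $\xi\in T_{q_0}^*N$ and trace-free symmetric $S\in S^2T_{q_0}^*N$, there is a harmonic function $f$ near $q_0$ with $\mathrm{d}f_{q_0}=\xi$ and $(\nabla\mathrm{d}f)_{q_0}=S$. Granting this, I would first take $\xi=0$ with $S$ an arbitrary trace-free symmetric form; then $\langle S,P\rangle=0$ for all such $S$ forces $P=\mu\,h^{-1}$ for some scalar $\mu\ge0$ (the annihilator of the trace-free symmetric tensors is spanned by the dual metric, and $P$ is positive semidefinite), which is exactly horizontal weak conformality with $\mu=\lambda^2$. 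With this established, $\langle H,P\rangle=\mu\operatorname{trace}_h H=0$ for harmonic $f$, so the identity reduces to $\mathrm{d}f_{q_0}(\tau(\phi)(p_0))=0$; letting $\xi=\mathrm{d}f_{q_0}$ range over all covectors yields $\tau(\phi)(p_0)=0$, i.e. harmonicity.

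I expect the existence lemma to be the main obstacle. The natural construction is to start, in geodesic normal coordinates centred at $q_0$, from the model $f_0=\xi_j y^j+\tfrac12 S_{jk}y^jy^k$, which realizes the desired $1$- and $2$-jet and satisfies $\Delta_N f_0(q_0)=\operatorname{trace}_h S=0$ because the Christoffel symbols vanish at $q_0$; however $f_0$ is only harmonic to first order there. I would then correct it by solving a local Dirichlet problem $\Delta_N u=-\Delta_N f_0$ and verifying, via elliptic regularity together with the vanishing order of $\Delta_N f_0$ at $q_0$, that the correction $u$ can be arranged to have vanishing $2$-jet at $q_0$, so that $f=f_0+u$ is genuinely harmonic with the prescribed jet. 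Controlling the $2$-jet of the correction is the delicate analytic point; everything else (the passage from $\langle S,P\rangle=0$ to $P=\mu\,h^{-1}$ and the identification $\mu=\lambda^2$) is routine linear algebra.
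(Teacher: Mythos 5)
The paper itself offers no proof of Theorem \ref{FugIsh}: the statement is quoted with citations to Fuglede and Ishihara, so there is no internal argument to compare yours against. What you propose is, in structure, exactly Ishihara's classical proof (also the one presented in the Baird--Wood monograph that appears in this paper's bibliography): the composition law $\tau(f\circ\phi)=\sum_\alpha(\nabla\mathrm{d}f)(\mathrm{d}\phi(e_\alpha),\mathrm{d}\phi(e_\alpha))+\mathrm{d}f(\tau(\phi))$, the easy direction via an adapted frame, and the converse by testing against local harmonic functions with prescribed $1$-jet and trace-free $2$-jet. The linear-algebra steps are sound: orthogonality of $P$ to every trace-free symmetric $S$ does force $P=\mu\,h^{-1}$ with $\mu\ge 0$, and $P=\mu\,h^{-1}$ is precisely the tensorial formulation of horizontal weak conformality (at points where $\mu=0$ one gets $|\mathrm{d}\phi|^2=\operatorname{trace}_h P=0$, i.e.\ a critical point, which is also allowed by Definition \ref{HorConDfn}); varying $\xi=\mathrm{d}f_{q_0}$ then kills $\tau(\phi)$.

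The one place where your text, taken literally, would not survive scrutiny is the existence lemma. On a curved target you cannot ``arrange the correction $u$ to have vanishing $2$-jet at $q_0$'' by a single Dirichlet solve: the natural way to remove the $2$-jet of $u$ would be to subtract a harmonic function realizing that jet, and producing such a function is precisely the lemma being proved, so the argument as phrased is circular. (In flat space it is not, because Euclidean harmonic polynomials of degree $\le 2$ realize every trace-free jet; that is exactly what fails to be available a priori on $(N,h)$.) The standard repair is a perturbation argument: solve the Dirichlet problem on $B_\epsilon(q_0)$ with zero boundary data, and show by Schauder estimates on the rescaled unit ball that the $2$-jet at $q_0$ of the correction is bounded by $C\epsilon$ times the norm of the data $(\xi,S)$; hence the linear map sending $(\xi,S)$ to the $2$-jet of the resulting harmonic function is $I+E_\epsilon$ with $\|E_\epsilon\|\le C\epsilon<1$, so it is invertible, and since its image consists of jets of genuine local harmonic functions, every admissible jet is attained. (Equivalently, run your correction as a convergent Neumann-series iteration rather than a single step.) With that replacement your outline is a complete and correct proof; note also that when $\dim N=1$ the lemma is vacuous but so is the claim, since horizontal weak conformality is automatic there.
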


\section{Foliations}
\begin{definition}\label{DfnFoliation}
For $k\in\{0,1,\dots,\infty,\omega\}$, let $M$ be an $m$-dimensional manifold of class $C^k$ and let
$n,q$ be positive integers such that $m=n+q$. 
 A \textit{foliation} $\F$ of $M$, of dimension $q$, is a decomposition $\F=\{L_\alpha\}$ of $M$ into disjoint, connected submanifolds, each of dimension $q$, such that:
Given a point $p$ in $M$, there is a submersion $\phi_p:U_p\rightarrow N^n$ of class $C^k$ from an open neighbourhood $U_p$ of $M$ to an $n$-dimensional manifold $N^n$. Further, the connected components of $U_p\cap L_\alpha$ are the fibres of $\phi_p$, i.e. for each $\alpha$ there exists a point $y_\alpha\in N^n$ such that
$$U_p\cap L_\alpha=\{x\in U_p\,|\,\phi_p(x)=y_\alpha\}.$$
The manifolds $L_\alpha$ are called the \textit{leaves} of the foliation. 

The connected components of the fibres of a smooth submersion $\phi$ form the leaves of a foliation. Such a foliation is called the \textit{foliation associated with $\phi$}.
\end{definition}

\begin{remark}
Note that we can, equivalently, define a foliation in terms of local coordinates. Let $M$ be an $m$-dimensional Riemannian manifold with local coordinates $(x^1,\dots,x^m)$ on an open neighbourhood $U$ of a point $p$ in $M$. Then each connected component $U\cap L_\alpha$, where $L_\alpha$ is given by Definition \ref{DfnFoliation}, is given by $(x^{q+1},\dots,x^{m})=\textrm{constant}$.
\end{remark}

Given a vector bundle $E\stackrel{\pi}{\to}M$, we denote by $C^\infty(E)$ the set of all its smooth sections.

\begin{definition}\label{InvDfn}
Let $M$ be a manifold of dimension $m$ and let $k$ be a positive integer less than or equal to $m$. Then a $k$-dimensional \textit{distribution} $\V $ is a $k$-dimensional subbundle of the tangent bundle $TM$.

Denote by $\lb{\cdot}{\cdot}$ the Lie bracket. A distribution $\V$ is called \textit{involutive} (or \textit{integrable}) if $\lb{V}{W}\in C^\infty(\V)$ for all $V,W\in C^\infty(\V)$, i.e. $\V$ is closed under the Lie bracket.
\end{definition}
The following theorem is called \textit{Frobenius' theorem}.
\begin{theorem}[\cite{Fug}]\label{FrubenThrm}
Suppose that $M$ is an $m$-dimensional manifold with a $q$-dimensional smooth involutive distribution $\V$. Let $N$ be any $q$-dimensional submanifold of $M$ such that $T_pN=\V_p$ for all points $p$ in $N$. Then the connected components of $N$ form the leaves of a smooth $q$-dimensional foliation $\F$. The tangent spaces of $\F$ are given by $\V$.

The converse is also true: Let $\F$ be a smooth foliation. Then its tangent spaces form an involutive distribution.
\end{theorem}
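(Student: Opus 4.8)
The plan is to reduce the nontrivial (forward) direction to a local normal form — a \emph{Frobenius chart} — and then patch the resulting local integral manifolds into global leaves. First I would prove that around every point $p\in M$ there exist coordinates $(x^1,\dots,x^m)$ in which $\V$ is spanned by $\partial/\partial x^1,\dots,\partial/\partial x^q$. Starting from arbitrary coordinates $(y^1,\dots,y^m)$ centred at $p$, after permuting them I may assume that $\V_p$ projects isomorphically onto $\mathrm{span}\{\partial/\partial y^1,\dots,\partial/\partial y^q\}$; by continuity this persists on a neighbourhood, so there is a unique local frame for $\V$ of the form $X_i=\partial/\partial y^i+\sum_{a>q}b_i^a\,\partial/\partial y^a$ for $i=1,\dots,q$.

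The key step is to upgrade involutivity to commutativity of this frame. Since each $X_i$ has the constant coordinate field $\partial/\partial y^i$ as its leading term, the bracket $\lb{X_i}{X_j}$ has vanishing $\partial/\partial y^1,\dots,\partial/\partial y^q$ components. But involutivity forces $\lb{X_i}{X_j}\in C^\infty(\V)$, and the only section of $\V$ whose first $q$ components vanish is the zero section; hence $\lb{X_i}{X_j}=0$. Commuting, linearly independent vector fields have commuting flows, and I would feed these into the inverse function theorem: the map $(t^1,\dots,t^q)\mapsto(\phi^1_{t^1}\circ\cdots\circ\phi^q_{t^q})(s)$, with $s$ ranging over a small transverse slice, is a local diffeomorphism straightening the $X_i$ to coordinate fields $\partial/\partial x^i$. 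In the resulting chart the slices $(x^{q+1},\dots,x^m)=\mathrm{const}$ are precisely the connected integral manifolds of $\V$.

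With the local charts in hand I would assemble the leaves. Given a $q$-dimensional $N$ with $T_pN=\V_p$ at every $p$, each point of $N$ lies in a slice of some Frobenius chart, and overlapping charts glue compatibly because the integral manifold through a point is locally unique; the maximal connected integral manifold through each point is then a leaf $L_\alpha$, and the local submersions $(x^{q+1},\dots,x^m)$ serve as the maps $\phi_p$ required by Definition \ref{DfnFoliation}. For the converse, which is the easy half, let $\F$ be a smooth foliation with tangent distribution $\V$. Working in coordinates adapted to $\F$ as in the Remark after Definition \ref{DfnFoliation}, any $V,W\in C^\infty(\V)$ have the form $V=\sum_{i\le q}V^i\,\partial/\partial x^i$ and likewise for $W$, so their last $n$ components vanish; a direct computation shows the components of $\lb{V}{W}$ along $\partial/\partial x^a$ for $a>q$ also vanish, whence $\lb{V}{W}\in C^\infty(\V)$ and $\V$ is involutive.

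The main obstacle is the forward local normal form, and within it the two linked steps of passing from involutivity to a commuting frame and then straightening that frame via its commuting flows; the topological patching into embedded (more precisely, initial) leaves is comparatively routine, requiring only a second-countability argument to control how a single leaf can re-enter a chart through different slices.
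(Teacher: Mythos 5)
Your proposal is mathematically sound, but note that the paper itself offers no proof of this theorem at all: it is stated with a citation to Fuglede's article \cite{Fug} and used as a black box, so there is nothing internal to compare your argument against. What you have written is the classical textbook proof of the Frobenius theorem, and its key steps are correct and correctly ordered: the computation showing that the adapted frame $X_i=\partial/\partial y^i+\sum_{a>q}b_i^a\,\partial/\partial y^a$ has brackets with no components along $\partial/\partial y^1,\dots,\partial/\partial y^q$, so that involutivity (every section of $\V$ with vanishing first $q$ components is zero) forces $\lb{X_i}{X_j}=0$; the straightening of the resulting commuting frame via the inverse function theorem applied to composed flows, which is exactly where commutativity is needed so that $\partial/\partial t^i$ agrees with $X_i$ away from the origin; and the patching of chart slices into maximal integral manifolds, where you rightly flag that leaves are initial rather than embedded submanifolds and that second countability is needed to control how a single leaf can meet one chart in countably many slices. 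The converse computation in coordinates adapted to the foliation is also correct and is indeed the easy half. The only caveat is that your text is a plan rather than a finished proof: the flow-straightening step and the gluing of local slices into globally well-defined leaves, together with the verification that the coordinate projections $(x^{q+1},\dots,x^m)$ really are the submersions $\phi_p$ demanded by Definition \ref{DfnFoliation}, are indicated rather than executed. But every idea required is present, no step as described would fail, and this is strictly more than the paper provides for this statement.
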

Now let $(M^m,g)$ be a Riemannian manifold. Denote by $\mathcal{V}$ a $q$-dimensional distribution on $M$ and $\mathcal{H}$ its orthogonal complement distribution on $M$. Then $$TM=\V\oplus\H.$$
We call the distributions $\V$ and $\H$ the \textit{vertical} and \textit{horizontal distribution}, respectively.

The second fundamental form of $\mathcal{V}$ and $\mathcal{H}$ are tensor fields of type $(2,1)$ given by
$$B^\mathcal{V}(U,V)=\frac {1}{2}\mathcal{H}(\nabla_UV+\nabla_VU)\qquad(U,V\in C^\infty(\mathcal{V}))$$
and
$$B^\mathcal{H}(X,Y)=\frac{1}{2}\mathcal{V}(\nabla_XY+\nabla_YX)\qquad(X,Y\in C^\infty(\mathcal{H})),$$
respectively.
Given the second fundamental form $B^\V$, we can define the \textit{mean curvature} of $\V$:
\begin{equation*}
    \mu^\V=\frac{1}{q}\trace{B^\V}.
\end{equation*}

We can now define what it means for the vertical distribution $\V$ to be \textit{minimal}, \textit{totally geodesic}, \textit{conformal} or \textit{Riemannian}:
\begin{definition}\label{PropOfVDfn}
Let $\V$ be the vertical distribution of a Riemannian manifold $(M,g)$ as above. Then $\V$ is
\begin{itemize}
    \item[(i)] \textit{minimal} if the mean curvature $\mu^\V$ vanishes at every point in $M$;
    \item[(ii)] \textit{totally geodesic} if $B^\V_p=0$ for any point $p$ in $M$;
    \item[(iii)] \textit{conformal} if there exists a vector field $V$ in $\mathcal{V}$ such that
\begin{equation*}
B^\mathcal{H}=g\otimes V;\end{equation*}
    \item[(iv)] \textit{Riemannian} if, in addition to being conformal, the vector field $V$ in $\mathcal{V}$ is identically zero.
\end{itemize}
\end{definition}
We end this section by stating a theorem connecting harmonic morphisms to foliations of codimension 2.
\begin{theorem}[{\cite[Proposition 4.7.1]{{Fug}}}]\label{FoliAndHarm}
Let $\F$ be a conformal foliation of codimension 2 on a Riemannian manifold $M$. Then the following is true if and only if $\F$ has minimal leaves: For any point $p$ in $M$, there exists an open neighbourhood $U$ such that the restriction $\F|_U$ is associated to a submersive harmonic morphism $\phi:U\to N^2$, where $N^2$ is a 2-dimensional Riemannian manifold.
\end{theorem}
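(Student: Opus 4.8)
The plan is to prove both implications by using Frobenius' theorem to pass to a local submersion defining $\F$ and then analysing its tension field. The decisive point is that the codimension is exactly $2$: this makes the standard second fundamental form formula for horizontally conformal maps collapse to a clean identity that ties harmonicity of the submersion directly to minimality of the leaves.

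First I would fix a point $p\in M$. Since $\F$ is a foliation, its tangent distribution $\V$ is involutive by Theorem \ref{FrubenThrm}, so there is an open neighbourhood $U$ of $p$ and a submersion $\phi\colon U\to N^2$ onto a $2$-dimensional manifold whose fibres are the connected leaves of $\F|_U$; thus $\ker\mathrm{d}\phi=\V$ and $\H$ is the horizontal distribution of $\phi$. Because $\F$ is conformal, the condition $B^\H=g\otimes V$ says precisely that the conformal class induced on the horizontal spaces by $g$ is invariant along the leaves, so it descends to a well-defined conformal structure on $N^2$. Choosing any Riemannian metric $h$ in this class makes $\phi$ a horizontally weakly conformal submersion with some positive dilation $\lambda$.

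The heart of the argument is the fundamental relation between the tension field of a horizontally weakly conformal submersion $\phi\colon(M^m,g)\to(N^n,h)$ and the geometry of its two distributions,
\begin{equation*}
\tau(\phi)=-(m-n)\,\mathrm{d}\phi(\mu^\V)+(2-n)\,\mathrm{d}\phi(\grad^\H\ln\lambda),
\end{equation*}
which I would obtain from $\tau(\phi)=\Div\mathrm{d}\phi$ by splitting the trace into vertical and horizontal parts and rewriting them through $B^\V$ and $B^\H$. Setting $n=2$ annihilates the dilation term and leaves
\begin{equation*}
\tau(\phi)=-(m-2)\,\mathrm{d}\phi(\mu^\V).
\end{equation*}
Since $B^\V$ takes values in $\H$, the mean curvature $\mu^\V$ is horizontal, and $\mathrm{d}\phi|_{\H}$ is injective because $\phi$ is a submersion and $\H=\V^\perp$; as the leaves have positive dimension $m-2>0$, it follows that $\tau(\phi)=0$ if and only if $\mu^\V=0$ on $U$.

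From this equivalence both directions follow at once. If $\F$ has minimal leaves then $\mu^\V\equiv 0$, so $\tau(\phi)=0$ and $\phi$ is harmonic; being also horizontally weakly conformal, it is a harmonic morphism by Theorem \ref{FugIsh}. Conversely, if $\F|_U$ is associated to a submersive harmonic morphism $\phi$, then Theorem \ref{FugIsh} shows $\phi$ is harmonic and horizontally weakly conformal, so the identity forces $\mathrm{d}\phi(\mu^\V)=0$ and hence $\mu^\V=0$ on $U$; since $p$ was arbitrary, the leaves are minimal throughout $M$. The step I expect to be the main obstacle is the careful derivation of the tension field formula, in particular the bookkeeping of the vertical and horizontal traces so that the coefficient $(2-n)$ of the dilation term appears correctly and vanishes exactly in codimension $2$.
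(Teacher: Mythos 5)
The paper never actually proves Theorem \ref{FoliAndHarm}: it is imported with the citation \cite[Proposition 4.7.1]{Fug} (a numbering that in fact matches the monograph of Baird and Wood \cite{Bai-Woo-book}, where this is precisely the proposition on conformal foliations of codimension two producing harmonic morphisms), so there is no internal argument to measure your proposal against. Your proof is correct in outline, and it is essentially the standard proof from that literature. The two steps that carry all the weight are exactly the ones you compress: (i) the descent of the horizontal conformal class to the local leaf space, and (ii) the fundamental tension-field identity. For (i), the justification is that the paper's conformality condition $B^{\H}=g\otimes V$ is equivalent to $(\mathcal{L}_U g)(X,Y)=-2\,g(U,V)\,g(X,Y)$ for $U$ vertical and $X,Y$ horizontal, i.e.\ to invariance of the conformal class of $g|_{\H}$ along the leaves; this is what makes the pushed-forward conformal structure on $N^2$ well defined and renders $\phi$ horizontally weakly conformal for any metric $h$ chosen in that class. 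For (ii), your formula $\tau(\phi)=-(m-n)\,\mathrm{d}\phi(\mu^{\V})+(2-n)\,\mathrm{d}\phi(\grad_{\H}\ln\lambda)$ is the correct one, with coefficients consistent with the paper's normalisation $\mu^{\V}=\frac{1}{q}\trace B^{\V}$, $q=m-2$: the vertical part of $\trace\nabla\mathrm{d}\phi$ contributes $-\mathrm{d}\phi(\trace B^{\V})$ because $\mathrm{d}\phi$ annihilates $\V$, while horizontal weak conformality turns the horizontal part into the dilation term, whose coefficient $(2-n)$ vanishes precisely for a two-dimensional target. Granting these two standard computations, the logic of both implications --- horizontality of $\mu^{\V}$, injectivity of $\mathrm{d}\phi|_{\H}$, Theorem \ref{FugIsh} applied in both directions, and the pointwise nature of minimality --- is sound, so your proposal stands as a complete and correct outline of the proof the paper chose to cite rather than give.
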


\chapter{4-Dimensional Lie groups}\label{4dimliegr}
In this chapter, we apply some of the theory from Chapter \ref{background} as well as our previous knowledge to construct the same $4$-dimensional Lie algebra as the one given in \cite{Gud-Sve-6}. This is done by introducing a $4$-dimensional Lie group $(G,g)$ equipped with a left-invariant, minimal and conformal foliation of codimension 2. By Theorem \ref{FoliAndHarm} such a foliation is locally given by submersive harmonic morphism. Our main goal of this chapter is to show that this Lie algebra will have the Lie bracket relations \eqref{LieBraRela}, as stated in \cite{Gud-Sve-6}.

\section{Setting the Stage}
Let $(G,g)$ be a Riemannian manifold. We denote by $\V$ an involutive distribution on $G$ and by $\H$ the orthogonal complementary distribution of $\V$. $\V$ and $\H$ also denote the orthogonal projections onto the corresponding subbundles of $TG$. Then the second fundamental form of $\mathcal{V}$ is
$$B^\mathcal{V}(U,V)=\frac {1}{2}\mathcal{H}(\nabla_UV+\nabla_VU)\qquad(U,V\in C^\infty(\mathcal{V})),$$
and the second fundamental form for $\mathcal{H}$ is given by
$$B^\mathcal{H}(X,Y)=\frac{1}{2}\mathcal{V}(\nabla_XY+\nabla_YX)\qquad(X,Y\in C^\infty(\mathcal{H})).$$

By Theorem \ref{FrubenThrm}, the distribution $\V$ has an associated $2$-dimensional foliation $\F$. $\F$ is said to be \textit{conformal} if there is a vector field $V$ in $\mathcal{V}$ such that
\begin{equation}\label{confFoli}
B^\mathcal{H}=g\otimes V.\end{equation} A conformal foliation is called \textit{Riemannian} if $V=0$. $\F$ is called \textit{minimal} if $\trace{B^\mathcal{V}}=0$ and \textit{totally geodesic} if $B^\mathcal{V}=0$. This is equivalent to saying that the leaves of $\F$ are minimal and totally geodesic, respectively.

We now specify the Riemannian manifold $(G,g)$ further, by letting $G$ be a $4$-dimensional Lie group and $g$ be a left-invariant Riemannian metric. Recall that a Lie group $G$ has an associated Lie algebra $\g$ of the same dimension as $G$ given by the set of all left-invariant vector fields of $G$. 
Let $K$ be a $2$-dimensional subgroup of $G$ and denote by $\k$ the Lie algebra of $K$. Let $\m$ be the orthogonal complement of $\k$:
$$
\m=\{X\in\g\,|\,g(X,Y)=0\ \textrm{for all} \ Y\in\k\}.
$$
We then let the Lie algebra $\k$ generate the involutive distribution $\V$ and $\m$ its orthogonal distribution $\H$. Let $\{X,Y,Z,W\}$ be an orthonormal basis of $\g$ such that $Z,W$ generate $\V$ and $X,Y$ generate $\H$. The foliation $\F$ is assumed to be minimal and conformal. Denote by $\lb{\cdot}{\cdot}$ the Lie brackets on $\g$. In the article \cite{Gud-Sve-6}, they state that the Lie bracket relations of $\g$ are of the form
\begin{equation}\label{LieBraRela}
  \begin{aligned}
\lb WZ\ =&\ \ \lambda W,\\
\lb ZX\ =&\ \ \alpha X +\beta Y+z_1 Z+w_1 W,\\
\lb ZY\ =&\ \ -\beta X+\alpha Y+z_2 Z+w_2 W,\\
\lb WX\ =&\ \      a X     +b Y+z_3 Z-z_1W,\\
\lb WY\ =&\ \     -b X     +a Y+z_4 Z-z_2W,\\
\lb YX\ =&\ \      r X         +\theta_1 Z+\theta_2 W,
  \end{aligned}
\end{equation}
for real constant. We are now going to show that this is true for the remainder of this chapter. It is important to note that the system \eqref{LieBraRela} alone does not necessarily describe a Lie algebra, since a Lie algebra must also satisfy the Jacobi identity.

The elements $W,Z$ in $\g$ can be picked so that $\lb WZ\ =\ \ \lambda\, W$ since the field $\V$ is involutive and thus closed under the Lie bracket, see Definition \ref{InvDfn}. The elements $X,Y$ are picked so that $\H\lb{Y}{X}=r\,X$ for some real structure constant $r$. Note that $\H$ is involutive if and only if $\theta_1=\theta_2=0$.

For the remaining Lie brackets, we use the fact that the Lie brackets of the base vectors are linear combinations of the base vectors, meaning that we get
\begin{eqnarray}
\left[Z,X\right]&=&\alpha X+\beta Y+z_1 Z+w_1 W, \label{ZX}\\
\left[Z,Y\right]&=&a_2X+b_2Y+z_2 Z+w_2 W,\label{ZY}\\
\left[W,X\right]&=&a X+b Y+z_3 Z+d_3W,\label{WX} \\
\left[W,Y\right]&=&a_4X+b_4Y+z_4 Z+d_4W,\label{WY}
\end{eqnarray}
for some real structure constants. We are now going to use minimality and confirmality of the foliation $\F$ to obtain the same Lie bracket relations as in Equation \eqref{LieBraRela}.

\section{Minimality}
The foliation $\mathcal{F}$ is minimal in $G$ if and only if $\mathrm{trace }(B^\mathcal{V})=0$. By \cite[Definition 6.23]{Gud-Riemann} the trace is given by
\begin{eqnarray*}
    \mathrm{trace }(B^\mathcal{V})
    &=&\frac{1}{2}\mathcal{H}(\nabla_ZZ+\nabla_ZZ)
    +\frac{1}{2}\mathcal{H}(\nabla_WW+\nabla_WW)\\
    &=&\mathcal{H}(\nabla_ZZ+\nabla_WW).
\end{eqnarray*}
Thus $0=\mathcal{H}(\nabla_ZZ+\nabla_WW)$. Using this, the Koszul formula for Riemannian Lie groups \cite[Proposition 6.13]{Gud-Riemann} and the fact that the elements $X$ and $Y$ are orthogonal to the vertical distribution $\mathcal{V}$, we can compute
\begin{eqnarray*}
    0
    &=&
    g(\mathcal{H}(\nabla_ZZ+\nabla_WW),X)
    =
    g(\nabla_ZZ+\nabla_WW,X)\\
    &=&
    g(\nabla_ZZ,X)+g(\nabla_WW,X)\\
    &=&
    \frac{1}{2}\{g(X,\left[Z,Z\right])+
    g(\left[X,Z\right],Z)+
    g(Z,\left[X,Z\right])\}\\
    &&+
    \frac{1}{2}\{g(X,\left[W,W\right])+
    g(\left[X,W\right],W)+
    g(W,\left[X,W\right])\}\\
    &=&g(\left[X,Z\right],Z)+g(\left[X,W\right],W)=
    -g(\left[Z,X\right],Z)-g(\left[W,X\right],W)\\
    &=&-z_1 -d_3
\end{eqnarray*}
and
\begin{eqnarray*}
    0
    &=&
    g(\mathcal{H}(\nabla_ZZ+\nabla_WW),Y)
    =
    g(\nabla_ZZ+\nabla_WW,Y)\\
    &=&
    g(\nabla_ZZ,Y)+g(\nabla_WW,Y)\\
    &=&
    \frac{1}{2}\{g(Y,\left[Z,Z\right])+
    g(\left[Y,Z\right],Z)+
    g(Z,\left[Y,Z\right])\}\\
    &&+
    \frac{1}{2}\{g(Y,\left[W,W\right])+
    g(\left[Y,W\right],W)+
    g(W,\left[Y,W\right])\}\\
    &=&g(\left[Y,Z\right],Z)+g(\left[Y,W\right],W)=
    -g(\left[Z,Y\right],Z)-g(\left[W,Y\right],W)\\
    &=&-z_2 -d_4.
\end{eqnarray*}
From this, we see that $d_3=-z_1$ and $d_4=-z_2$, which is in line with \cite{Gud-Sve-6}.

\section{Conformality}
As stated before, the foliation $\F$ is conformal if and only if Equation \eqref{confFoli} is fulfilled. We see that this is equivalent to saying that for the orthonormal basis $\{X,Y\}$ of $\H$
$$B^\H(X,X)=B^\H(Y,Y) \ \ \textrm{and} \ \ B^\H(X,Y)=0.
$$
By the use of the Koszul formula for Riemannian Lie groups \cite[Proposition 6.13]{Gud-Riemann} and Equation \eqref{ZX} to \eqref{WY} we see that
\begin{equation*}
    g(B^\H(X,X),Z)=g(X,\lb ZX)=\alpha,
\end{equation*}
\begin{equation*}
    g(B^\H(X,X),W)=g(X,\lb WX)=a,
\end{equation*}
\begin{equation*}
    g(B^\H(Y,Y),Z)=g(Y,\lb ZY)=b_2,
\end{equation*}
\begin{equation*}
    g(B^\H(Y,Y),W)=g(Y,\lb WY)=b_4,
\end{equation*}
\begin{equation*}
    g(B^\H(X,Y),Z)=\frac{1}{2}(g(\lb ZX,Y)+g(\lb ZY,X))=\frac{1}{2}(\beta+a_2) \ \ \textrm{and}
\end{equation*}
\begin{equation*}
    g(B^\H(X,Y),W)=\frac{1}{2}(g(\lb WX,Y)+g(\lb WY,X))=\frac{1}{2}(b+a_4).
\end{equation*}
It follows that
$$b_2=\alpha, \ \ b_4=a,\ \ a_2=-\beta
\ \ \textrm{and} \ \ a_4=-b.
$$
We observe that our resulting constants are the same as those in \cite{Gud-Sve-6}.

\chapter{Almost Hermitian Manifolds}\label{AlmostHermitianManifolds}
\section{Almost Complex Manifolds}
We will now define a complex manifold. Recall that a topological manifold $M$ of dimension $m$ is a paracompact Hausdorff space such that for all points $p$ in $M$, there exists a neighbourhood $U_p$ and a homeomorphism $x_p:U_p\rightarrow x(U_p)\subset \R^m$. As will be seen, the definition of a complex manifold is similarly defined:

\begin{definition}
A $2m$-dimensional manifold is called a \textit{complex manifold} if there exists an atlas
\begin{equation*}
    \A=\{(U_\alpha,z^\alpha):\alpha\in\I\},
\end{equation*}
of complex charts $$z^\alpha:U_\alpha\rightarrow z^\alpha(U_\alpha)\subset\C^m$$
such that, for each $\alpha,\beta\in\I$,
\begin{itemize}
    \item[(i)] $U_\alpha\cap U_\beta=\emptyset$ or
    \item[(ii)] the transition map
    $z^\alpha\circ (z^\beta)^{-1}:\C
    ^m\rightarrow\C^m$ is holomorphic.
\end{itemize}
We then say that $M$ is a \textit{complex manifold of dimension $m$}.
\end{definition}

Suppose that $M$ is a complex manifold of dimension $m$ and let $(U_\alpha,z^\alpha)$ be a complex chart such that $U_\alpha$ contains the point $p\in M$. Let $k$ be an integer between $1$ and $m$. The k-th coordinate of the point $p\in U_\alpha$ is then given by the map
\begin{eqnarray*}
z_k^\alpha:U_\alpha&\rightarrow&\C\\
p&\mapsto& \mathrm{proj}_k(z^\alpha(p)).
\end{eqnarray*}
Each coordinate has a decomposition $z_k^\alpha=x_k^\alpha+iy_k^\alpha$, where $x_k^\alpha$ and $y_k^\alpha$ are real. The set $$\left\{\frac{\partial}{\partial x_k^\alpha},\frac{\partial}{\partial y_k^\alpha}:1\leq k\leq m\right\}$$ then becomes a basis of the tangent space $T_pM$. Note that we now regard $M$ as a real manifold.

\begin{definition}\label{NaturalAlmostComplex}
Let $M$ be a complex $m$-dimensional manifold and let $(z_1,\dots,z_m)$ be local coordinates around a point $p\in M$, each having the real decomposition $z_k=x_k+iy_k$ for $1\leq k\leq m$. A \textit{complex structure} $J:TM\rightarrow TM$ is an endomorphism of the tangent bundle $TM$ of $M$ which in local coordinates is given by
\begin{equation}\label{NaturalAlmCom}
J_p\left(\frac{\partial}{\partial x_k^\alpha}\right)=\frac{\partial}{\partial y_k^\alpha}, \ \
J_p\left(\frac{\partial}{\partial y_k^\alpha}\right)=-\frac{\partial}{\partial x_k^\alpha}.
\end{equation}
\end{definition}

The following example shows that complex structures can be seen as generalizations of multiplication by $i$ in the complex plane.

\begin{example}
The space $\C$ of complex numbers is clearly a complex manifold. Any $z\in\C$ has a decomposition $z=x+iy$, where $x$ and $y$ are real numbers. At any point in $\C$, the tangent space is isomorphic to $\C$ itself. Thus, $e_1=1$ and $e_2=i$ become basis vectors for the tangent space at any point in $\C$. Let $J:TM\rightarrow TM$ be an endomorphism given by $J_w(z)=iz$ for $w,z\in\C$. Then $J$ is a complex structure by Equation \eqref{NaturalAlmCom}, since $J_w(e_1)=e_2$ and $J_w(e_2)=-e_1$ for any $w\in\C$.
\end{example}

\begin{definition}
An \textit{almost complex structure} on a differentiable manifold $M$ of even dimension is a tensor field of type $(1,1)$, such that for each point $p\in M$, the restriction $J_p=\left.J\right|_{T_pM}$ is an endomorhism $J_p:T_pM\rightarrow T_pM$ such that $J_p^2=-\id_{T_pM}$. If there exists an almost complex structure on $M$, the manifold $M$ is called an \textit{almost complex manifold}.
\end{definition}

\begin{remark}
Note that the complex structure $J$ on a complex manifold $M$ given in Definition \ref{NaturalAlmostComplex} is an almost complex structure. $J$ is also called the \textit{standard almost complex structure}.
\end{remark}

An almost complex structure $J$ on an almost complex manifold $(M,J)$ is called \textit{integrable} if there exists an atlas of complex charts such that any complex coordinates
satisfy \eqref{NaturalAlmCom}.

\begin{definition}\label{NijTensorDef}
The \textit{Nijenhuis tensor} of an almost complex manifold $(M,J)$ is defined as
\begin{equation}
    N_J(U,V)=
    \left[U,V\right]
    +
    J\left[JU,V\right]
    +
    J\left[U,JV\right]
    -
    \left[JU,JV\right],
\end{equation}
where $U,V\in C^\infty(TM)$.
\end{definition}

The following result is called the \textit{Newlander-Nirenberg theorem} \cite{New-Nir}.

\begin{theorem}\label{New-Nir}
For an almost complex manifold $(M,J)$, the almost complex structure $J$ is integrable if and only if its associated Nijenhuis  tensor $N_J$ vanishes for all vector fields $U,V\in C^\infty(TM)$.
\end{theorem}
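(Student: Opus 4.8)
The two implications are of completely different character, so I would handle them separately.

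\textbf{The forward implication} ($J$ integrable $\Rightarrow N_J=0$) is a direct computation. If $J$ is integrable, then around each point there are coordinates $(x_1,y_1,\dots,x_m,y_m)$ in which $J$ takes the standard form \eqref{NaturalAlmCom}. Since the coordinate vector fields commute, I would simply evaluate $N_J$ on every pair drawn from the coordinate basis $\{\partial/\partial x_k,\partial/\partial y_k\}$. Because each Lie bracket in Definition \ref{NijTensorDef} then vanishes, all four terms vanish and $N_J=0$ on a basis; by tensoriality it vanishes on all of $C^\infty(TM)$.

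\textbf{The reverse implication} is the substantial content, and the first step is to recast $N_J=0$ as an involutivity condition on the complexification. I would complexify to $TM\otimes\C$ and split it into the $\pm i$-eigenbundles $T^{1,0}$ and $T^{0,1}$ of the complex-linear extension of $J$; here $\tfrac12(\id+iJ)$ projects onto $T^{0,1}$ and $\tfrac12(\id-iJ)$ onto $T^{1,0}$. For sections $U,V$ of $T^{0,1}$ one has $JU=-iU$ and $JV=-iV$, and substituting into Definition \ref{NijTensorDef} collapses the four terms to
\begin{equation*}
N_J(U,V)=2\bigl([U,V]-iJ[U,V]\bigr)=4\cdot\tfrac12(\id-iJ)[U,V],
\end{equation*}
which is four times the $T^{1,0}$-component of $[U,V]$. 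A parallel computation shows $N_J$ vanishes automatically on mixed pairs $T^{1,0}\times T^{0,1}$, and conjugation relates the $T^{1,0}\times T^{1,0}$ case to the one above. Hence $N_J\equiv0$ is \emph{exactly} the statement that $T^{0,1}$ (equivalently $T^{1,0}$) is closed under the Lie bracket, i.e. involutive in the sense of Definition \ref{InvDfn}.

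Given involutivity, the target is local coordinates realizing \eqref{NaturalAlmCom}, which amounts to producing $m$ complex-valued functions $z_1,\dots,z_m$ whose differentials annihilate $T^{0,1}$ and are independent; their real and imaginary parts then furnish the desired chart. If $M$ and $J$ are real-analytic, I would finish by applying the holomorphic version of Theorem \ref{FrubenThrm} to integrate $T^{0,1}$, yielding these coordinates directly.

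\textbf{The main obstacle} is the smooth category, where this shortcut fails: $T^{0,1}$ is a subbundle of the \emph{complexified} tangent bundle, not of a real one, so Frobenius does not apply. Instead one must solve the overdetermined first-order system $\bar\partial_J z_k=0$ outright; involutivity ($N_J=0$) supplies precisely the compatibility condition making the system formally solvable, but obtaining genuine $C^\infty$ solutions with independent differentials requires the elliptic a priori estimates and iteration scheme at the heart of the original argument. This analytic step is what makes the theorem deep, and it is the part I would import from \cite{New-Nir} rather than reprove.
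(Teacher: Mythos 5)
Your proposal is correct, but it takes a genuinely different route from the paper for the simple reason that the paper has no proof at all: its entire argument for Theorem \ref{New-Nir} is the single line ``See \cite{New-Nir}.'' You, by contrast, actually prove the elementary direction (in coordinates realizing \eqref{NaturalAlmCom} all four brackets in Definition \ref{NijTensorDef} vanish on the coordinate frame, and $N_J$ is tensorial), and your reduction of the converse is accurate: for sections $U,V$ of $T^{0,1}$ one indeed gets $N_J(U,V)=2\bigl([U,V]-iJ[U,V]\bigr)$, which is four times the $T^{1,0}$-part of $[U,V]$; mixed pairs vanish automatically; and conjugation handles the $T^{1,0}\times T^{1,0}$ case, so $N_J\equiv 0$ is exactly involutivity of $T^{0,1}$. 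You are also right about where the depth lies: Theorem \ref{FrubenThrm} applies only to real distributions, not to subbundles of $TM\otimes\C$, so in the smooth category one must solve $\bar\partial_J z_k=0$ via the elliptic machinery of \cite{New-Nir}, which you defer to the citation just as the paper defers everything. What each approach buys: the paper's pure citation is a defensible scoping decision in a thesis, while your outline isolates precisely which step is deep, gives the reader the cheap half and the algebraic reduction for free, and makes the role of the hypothesis $N_J=0$ transparent. One small caveat: calling $T^{0,1}$ ``involutive in the sense of Definition \ref{InvDfn}'' stretches that definition, which is stated only for real distributions; you implicitly acknowledge this later, but it deserves an explicit remark.
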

\begin{proof}
See \cite{New-Nir}.
\end{proof}

\section{Almost Hermitian Manifolds}

\begin{definition}\label{AlmHerMfdDfn}
Let $(M,g,J)$ be a Riemannian manifold  equipped with an (almost) complex structure $J$. The structure $J$ is said to be \textit{compatible} with the metric $g$ if
\begin{equation*}
    g(E,F)=g(JE,JF),\qquad E,F\in C^\infty(TM).
\end{equation*}
The metric $g$ is then said to be \textit{Hermitian}. An \textit{(almost) Hermitian manifold} is a Riemannian manifold equipped with a compatible (almost) complex structure. 
\end{definition}

\begin{remark}\label{basisJ}
Suppose that $(M,g,J)$ is an (almost) Hermitian manifold of dimension $2n$ in the sense of a real manifold. Given a point $p$ in $M$, we can construct an orthonormal basis of the tangent space $T_pM$ in the following way. Let $e_1$ be an arbitrary tangent vector at $p$ of unitary length. That is, $g(e_1,e_1)=1$. Let $e_2=Je_1$. Note that $e_2$ is also of unit length since $J$ is compatible with $g$. By compatibility, $e_1$ and $e_2$ are also orthogonal since $$g(e_1,e_2)=g(e_1,Je_1)=g(Je_1,J^2e_1)=g(Je_1,-e_1)=-g(e_2,e_1),$$ 
or equivalently, $g(e_1,e_2)=0$. The tangent vectors $e_1$ and $e_2$ generate a plane $V_1$. In $V_1^\perp$, we pick a tangent vector $e_3$ of unit length and let $e_4=Je_3$. Then $\{e_1,e_2,e_3,e_4\}$ generates a $4$-dimensional space $V_2$. Let $e_5$ be a tangent vector of unit length in the space $V_2^\perp$ and so on. Eventually, we are left with the set $\{e_1,e_2,\dots,e_{2n-1},e_{2n}\}$ of orthonormal tangent vectors where $J(e_{2k-1})=e_{2k}$. Since $J^2(e_{2k-1})=-e_{2k-1}$, we also have $J(e_{2k})=-e_{2k-1}$.
\end{remark}

\begin{remark}
Notice that an (almost) complex Riemannian manifold $(M,g,J)$ always has a Hermitian metric, since we can define a Riemannian metric $h$ by
\begin{equation*}
    h(X,Y)=g(X,Y)+g(JX,JY).
\end{equation*}
It is easy to see that $J$ is compatible with the metric $h$. 
\end{remark}

\begin{definition}\label{KahlerForm}
Let $(M,g,J)$ be an (almost) Hermitian manifold. Then its \textit{K\"{a}hler form} $\omega$ is a 2-form defined by
\begin{equation*}
    \omega(E,F)=g(JE,F),\qquad E,F\in C^\infty(TM).
\end{equation*}
\end{definition}

\begin{definition}\label{AlmKahlDef}
Let $(M,g,J)$ be an (almost) Hermitian manifold with Levi-Cevita connection $\nabla$.  The (almost) complex structure $J$ is said to be (almost) \textit{K\"{a}hler} if and only if $\nabla J=0$ i.e. if $J$ is parallel. In that case, the triple $(M,g,J)$ is called an \textit{(almost) K\"{a}hler manifold}.
\end{definition}

We will now introduce a lemma (see Lemma \ref{W-B}) which shows that the definition of a Hermitian manifold also can be defined in terms of its K\"{a}hler form. Before doing that, we need to define the exterior derivative.

\begin{definition}\label{extder}
Let $M$ be a differentiable manifold, $\omega$ be a k-form on $M$ and $X_1,\dots,X_{k+1}$ be smooth vector fields. Then the \textit{exterior derivative} $d\omega$ of $\omega$ is given by
\begin{eqnarray*}
    d\omega(X_1,\dots,X_{k+1})
    &=&
    \sum_{i=1}^{k+1}(-1)^{i+1}X_i(\omega(X_1,\dots,\hat{X_i},\dots,X_{k+1}))\\
    &&+
    \sum_{i<j}(-1)^{i+j}\omega(\left[X_i,X_j\right],X_1,\dots,\hat{X_i},\dots,\hat{X_j},\dots,X_{k+1}),
\end{eqnarray*}
where $\hat{X}_i$ means that $X_i$ is missing.
\end{definition}
\begin{example}
The K\"{a}hler form $\omega$ on an (almost) Hermitian manifold is a 2-form. By letting $k=2$ in Definition \ref{extder} we get
\begin{equation}\label{ExtDerKahlForm}
\begin{aligned}
    d\omega(X_1,X_2,X_3)
    =&
    X_1(\omega(X_2,X_3))-X_2(\omega(X_1,X_3))+X_3(\omega(X_1,X_2))\\
    &-\omega(\lb{X_1}{X_2},X_3)
    +\omega(\lb{X_1}{X_3},X_2)
    -\omega(\lb{X_2}{X_3},X_1).
\end{aligned}
\end{equation}
\end{example}

The next lemma is taken from \cite[Proposition 4.16]{Ballman}.

\begin{lemma}\label{W-B}
Let $(M,g,J)$ be an (almost) Hermitian manifold. Then
\begin{equation*}
    d\omega(X,Y,Z)=
    g((\nabla_XJ)Y,Z)
    +
    g((\nabla_YJ)Z,X)
    +
    g((\nabla_ZJ)X,Y)
\end{equation*}
and
\begin{equation*}
    2\cdot g((\nabla_XJ)Y,Z)
    =
    d\omega(X,Y,Z)
    -d\omega(X,JY,JZ)
\end{equation*}
for any vector fields $X,Y,Z\in C^\infty(TM)$.
\begin{proof}
Notice that, since $J$ is compatible with $g$, $$g(JX,Y)=g(J^2X,JY)=-g(X,JY).$$ 
According to the definition of the covariant derivative $\nabla J$ of $J$, we have  $$(\nabla_XJ)Y=\nabla_X(JY)-J\nabla_XY$$ and we can expand
\begin{eqnarray*}
g((\nabla_XJ)Y,Z)&=&g(\nabla_X(JY),Z)-g(J\nabla_XY,Z)\\
&=&
g(\nabla_X(JY),Z)+g(\nabla_XY,JZ).
\end{eqnarray*}
By an application of the Koszul formula
, we get
\begin{eqnarray*}
    g(\nabla_X(JY),Z)
    &=&
    \frac{1}{2}\{X(g(JY,Z))+(JY)(g(X,Z))-Z(g(X,JY))\\
    &&+g(\lb{Z}{X},JY)+g(\lb{Z}{JY},X)
    +g(Z,\lb{X}{JY})
    \}
\end{eqnarray*}
and
\begin{eqnarray*}
    g(\nabla_XY,JZ)
    &=&
    \frac{1}{2}\{X(g(Y,JZ))+Y(g(X,JZ))-(JZ)(g(X,Y))\\
    &&+
    g(\lb{JZ}{X},Y)+g(\lb{JZ}{Y},X)+g(JZ,\lb{X}{Y})
    \}.
\end{eqnarray*}
After doing the same thing for $g((\nabla_YJ)Z,X)$ and $g((\nabla_ZJ)X,Y)$, we see that
\begin{eqnarray*}
    &&g((\nabla_XJ)Y,Z)
    +
    g((\nabla_YJ)Z,X)
    +
    g((\nabla_ZJ)X,Y)\\=&&
    X(g(JY,Z))
    -Y(g(JX,Z))
    +Z(g(JX,Y))\\
    &&
    -g(J\lb{X}{Y},Z)+g(J\lb{X}{Z},Y)-g(J\lb{Y}{Z},X)\\
    =&&
    X(\omega(Y,Z))
    -Y(\omega(X,Z))
    +Z(\omega(X,Y))\\
    &&
    -\omega(\lb{X}{Y},Z)+\omega(\lb{X}{Z},Y)-\omega(\lb{Y}{Z},X)
\end{eqnarray*}
This corresponds to the exterior derivative of $\omega$ in Equation \eqref{ExtDerKahlForm}. Thus the first statement of the lemma holds.

To prove the second part, we use the exterior derivative $d\omega$ of $\omega$ and get
\begin{eqnarray*}
    d\omega(X,Y,Z)&=&
    X(g(JY,Z))
    -Y(g(JX,Z))
    +Z(g(JX,Y))\\
    &&\quad
    -g(J\lb{X}{Y},Z)+g(J\lb{X}{Z},Y)-g(J\lb{Y}{Z},X)\\
\end{eqnarray*}
and
\begin{eqnarray*}
    -d\omega(X,JY,JZ)=&&
    -X(g(-Y,JZ))
    +JY(g(X,Z))
    -JZ(g(X,Y))\\
    &&\quad
    +g(\lb{X}{JY},Z)-g(\lb{X}{JZ},Y)+g(J\lb{JY}{JZ},X).
\end{eqnarray*}
After adding and rearranging, we get
\begin{eqnarray*}
    d\omega(X,Y,Z)-d\omega(X,JY,JZ)&=&
    -Y(g(JX,Z))+JY(g(X,Z))
    \\
    &&
    \quad+Z(g(JX,Y))-JZ(g(X,Y)) \\
    &&
    \quad+g(J\lb{JY}{JZ},X)-g(J\lb{Y}{Z},X)
    \\
    &&
    \quad+g(J\lb{X}{Z},Y)-g(\lb{X}{JZ},Y)\\
    &&
    \quad+g(\lb{X}{JY},Z)-g(J\lb{X}{Y},Z).
\end{eqnarray*}
By our earlier calculations we find that
\begin{eqnarray*}
    2(g(\nabla_X(JY),Z)+g(\nabla_XY,JZ))
    &=&
    -Y(g(JX,Z))+JY(g(X,Z))\\
    &&\quad+Z(g(JX,Y))-JZ(g(X,Y))\\
    &&\quad+g(\lb{Z}{JY},X)+g(\lb{JZ}{Y},X)\\
    &&\quad+g(J\lb{X}{Z},Y)-g(\lb{X}{JZ},Y)\\
    &&\quad+g(Z,\lb{X}{JY})-g(Z,J\lb{X}{Y}).
\end{eqnarray*}
From this we see that
\begin{equation*}
2(g(\nabla_X(JY),Z)+g(\nabla_XY,JZ))=(d\omega(X,Y,Z)-d\omega(X,JY,JZ))
\end{equation*}
and the second statement holds.
\end{proof}
\end{lemma}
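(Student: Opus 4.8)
The plan is to prove both identities by reducing every covariant-derivative expression to the Koszul formula and then matching the outcome against the explicit expansion of $d\omega$ recorded in Equation \eqref{ExtDerKahlForm}. Throughout I would rely on two structural facts: the compatibility of $J$ with $g$, which is equivalent to the skew-symmetry relation $g(JE,F)=-g(E,JF)$, and the Leibniz rule $(\nabla_X J)Y=\nabla_X(JY)-J\nabla_X Y$ defining the covariant derivative of $J$.

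For the first identity, I would begin by rewriting
\[
g((\nabla_X J)Y,Z)=g(\nabla_X(JY),Z)+g(\nabla_X Y,JZ),
\]
where the second equality uses the Leibniz rule together with $-g(J\nabla_X Y,Z)=g(\nabla_X Y,JZ)$. I would then apply the Koszul formula to each of the two covariant derivatives on the right, and repeat this for the cyclic permutations $g((\nabla_Y J)Z,X)$ and $g((\nabla_Z J)X,Y)$. On summing the three contributions, the terms in which a vector field differentiates a metric function collapse, via $\omega(E,F)=g(JE,F)$, into $X(\omega(Y,Z))-Y(\omega(X,Z))+Z(\omega(X,Y))$, while the Lie-bracket terms reorganize into $-\omega([X,Y],Z)+\omega([X,Z],Y)-\omega([Y,Z],X)$. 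Together these reproduce precisely $d\omega(X,Y,Z)$ as written in \eqref{ExtDerKahlForm}.

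For the second identity, I would expand both $d\omega(X,Y,Z)$ and $d\omega(X,JY,JZ)$ directly from \eqref{ExtDerKahlForm}. In the latter I would use $J^2=-\mathrm{id}$ and compatibility to simplify entries such as $\omega(JY,JZ)=\omega(Y,Z)$ and the bracket terms of the form $g(J[X,JY],\cdot)$. Forming the difference $d\omega(X,Y,Z)-d\omega(X,JY,JZ)$ and rearranging, the genuinely diagonal pieces cancel in pairs, and the surviving expression should coincide with the quantity $2\,(g(\nabla_X(JY),Z)+g(\nabla_X Y,JZ))$ already computed from the Koszul formula in the first part; by the Leibniz rule and skew-symmetry this equals $2\,g((\nabla_X J)Y,Z)$, as claimed.

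The main obstacle is essentially the bookkeeping. Each Koszul expansion contributes six terms, so summing three cyclic derivative expressions and then tracking every sign under the compatibility relation $g(JE,F)=-g(E,JF)$ is where errors are easiest to introduce. The facts that make all the superfluous terms cancel are the skew-symmetry of $\omega$, the involutivity $J^2=-\mathrm{id}$, and the torsion-freeness of the Levi-Civita connection; once these are applied consistently, the non-matching terms disappear and both displayed formulas follow.
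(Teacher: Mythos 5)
Your proposal follows essentially the same route as the paper's own proof: the Leibniz rule $(\nabla_XJ)Y=\nabla_X(JY)-J\nabla_XY$ combined with skew-symmetry $g(JE,F)=-g(E,JF)$, Koszul expansions of each covariant derivative, summation over the cyclic permutations to recover $d\omega(X,Y,Z)$ for the first identity, and for the second identity a direct expansion of $d\omega(X,Y,Z)-d\omega(X,JY,JZ)$ matched against the already-computed Koszul expression for $2\,(g(\nabla_X(JY),Z)+g(\nabla_XY,JZ))$. The strategy and all the key identities you invoke are correct, so this is the same argument, modulo carrying out the bookkeeping you acknowledge.
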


As a direct consequence of Lemma \ref{W-B}, we now get the following:
\begin{proposition}[{\cite[Theorem 4.17]{Ballman}}]\label{domegaKahler}
Let $(M,g,J)$ be an (almost) Hermitian manifold and denote by $\omega$ its corresponding K\"{a}hler form. Then
$\nabla J=0$ if and only if $d\omega=0$, where $d\omega$ is the exterior derivative of $\omega$.
\end{proposition}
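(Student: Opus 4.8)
The plan is to derive this directly from Lemma~\ref{W-B}, which is the natural engine for the proof. Recall that the condition $\nabla J = 0$ is by definition the statement that the tensor $\nabla_X J$ vanishes for all vector fields $X$, which in turn is equivalent to $g((\nabla_X J)Y, Z) = 0$ for all $X, Y, Z \in C^\infty(TM)$, since $g$ is nondegenerate and $Z$ ranges over all vector fields. So the proposition reduces to showing that this scalar quantity vanishes identically if and only if $d\omega = 0$.

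The forward direction is immediate: if $\nabla J = 0$, then each term $g((\nabla_X J)Y, Z)$, $g((\nabla_Y J)Z, X)$, and $g((\nabla_Z J)X, Y)$ vanishes, so by the first identity of Lemma~\ref{W-B} we get $d\omega(X,Y,Z) = 0$ for all $X,Y,Z$, i.e. $d\omega = 0$. The substantive direction is the converse. Here I would invoke the second identity of Lemma~\ref{W-B}, namely
\begin{equation*}
    2\, g((\nabla_X J)Y, Z) = d\omega(X,Y,Z) - d\omega(X, JY, JZ).
\end{equation*}
If $d\omega = 0$, then both terms on the right-hand side vanish (the second because $JY$ and $JZ$ are themselves smooth vector fields and $d\omega$ vanishes on all triples), so $g((\nabla_X J)Y, Z) = 0$ for all $X, Y, Z$. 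By nondegeneracy of $g$ this forces $(\nabla_X J)Y = 0$ for all $X, Y$, which is exactly $\nabla J = 0$.

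The main point to be careful about is that the converse genuinely needs the \emph{second} identity of the lemma and not just the first: vanishing of $d\omega$ makes the \emph{cyclic sum} of the three terms $g((\nabla_X J)Y,Z)$ vanish, but this alone does not isolate a single term. The second identity does the isolation, expressing $g((\nabla_X J)Y, Z)$ as a difference of two values of $d\omega$, each of which vanishes separately when $d\omega = 0$. I do not anticipate a serious obstacle since both identities are already established in Lemma~\ref{W-B}; the proof is essentially a short deduction, with the only care needed being the final appeal to nondegeneracy of the metric to pass from the vanishing of $g((\nabla_X J)Y, Z)$ for all $Z$ to the vanishing of the tensor $\nabla J$ itself.
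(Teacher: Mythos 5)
Your proof is correct and follows exactly the route the paper intends: the paper presents Proposition~\ref{domegaKahler} as a direct consequence of Lemma~\ref{W-B}, with the first identity giving $\nabla J = 0 \Rightarrow d\omega = 0$ and the second identity (plus nondegeneracy of $g$) giving the converse. Your observation that the converse genuinely requires the second identity rather than the cyclic sum alone is precisely the point that makes the lemma's second statement necessary.
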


\chapter{The Space of Covariant Derivatives of the K\"{a}hler Form}\label{SpaceW}

We define a finite-dimensional vector space $W$ consisting of tensors with the same symmetries as those of the covariant derivative of the K\"{a}hler form of an almost Hermitian manifold. This work is based on the article \cite{Gra-Her}.

We consider a real vector space $V$ of dimension $2n$. $V$ is equipped with a positive definite inner product $g$ and an almost complex structure $J$ which is compatible with $g$, meaning $g(Jx,Jy)=g(x,y)$ for all $x,y\in V$.
Put
 \begin{equation}
    W=\{\alpha\in  V^*\otimes  V^*\otimes  V^*
    \,|\,
    \alpha(x,y,z)=-\alpha(x,z,y)=-\alpha(x,Jy,Jz), x,y,z\in V
    \},
\end{equation}
where $ V^*$ is the dual space of $V$.
We define $\Bar{\alpha}(z)\in V^*$ by
\begin{equation*}
    \Bar{\alpha}(z)=\sum_{i=1}^{2n}\alpha(e_i,e_i,z),
\end{equation*}
where $\{e_1,\dots,e_{2n}\}$ is an orthonormal basis for $ V$. We also define a scalar product $\langle\cdot,\cdot\rangle$ on $W$ by
\begin{equation}\label{Winnerproduct}
    \langle\alpha,\beta\rangle
    =
    \sum_{i,j,k=1}^{2n}\alpha(e_i,e_j,e_k)\beta(e_i,e_j,e_k),
\end{equation}
where $\alpha,\beta\in W$ and $\{e_1,\dots,e_{2n}\}$ is an orthonormal basis for $ V$. 
The space $W$ can then be split into four different subsets\footnote{Note that $W_4$ is defined differently than in \cite{Gra-Her}, due to a mistake found by Kexing Chen, author of \cite{Che}.} \cite{Gra-Her}
\begin{eqnarray*}
W_1&=&\{\alpha\in W\,|\,\alpha(x,x,z)=0\textrm{ for all }x,z\in V\},\\
W_2&=&\{\alpha\in W\,|\,\alpha(x,y,z)+\alpha(z,x,y)+\alpha(y,z,x)=0\textrm{ for all }x,y,z\in V\},\\
W_3&=&\{\alpha\in W\,|\,\alpha(x,y,z)-\alpha(Jx,Jy,z)=\Bar{\alpha}(z)=0\textrm{ for all }x,y,z\in V\},\\
W_4&=&\{\alpha\in W\,|\,\alpha(x,y,z)=
    \frac{1}{2(n-1)}(g(x,y)\Bar{\alpha}(z)-g(x,z)\Bar{\alpha}(y)\\
    &&\ \ \ \ \ \ \ \ \ \ \ \ \ \ \ \ \ \ \ \ \ \ \ \ \ \ \ \  -g(x,Jy)\Bar{\alpha}(Jz)+g(x,Jz)\Bar{\alpha}(Jy)) \textrm{ for all }x,y,z\in V\}.
\end{eqnarray*}
It is stated in \cite{Gra-Her} that $W_1$, $W_2$, $W_3$ and $W_4$ are orthogonal and that $$W=W_1\oplus W_2\oplus W_3\oplus W_4.$$
There, they use a proof involving representation theory. We will, instead, prove this by using a given orthonormal basis, see Lemma \ref{AllPlus}. Proposition \ref{Split4} in the case of $n=2$ will be of most importance for our work in the next chapters.

\begin{lemma}[\cite{Gra-Her}]\label{PlusLemma}
For the above defined subsets we have
\begin{equation*}
    W_1\oplus W_2=\{\alpha\in W\,|\,\alpha(x,y,z)+\alpha(Jx,Jy,z)=0\textrm{ for all }x,y,z\in V\}
\end{equation*}
and
\begin{equation*}
   W_3\oplus W_4 =\{\alpha\in W\,|\,\alpha(x,y,z)-\alpha(Jx,Jy,z)=0\textrm{ for all }x,y,z\in V\}.
\end{equation*}
The sets $W_1$, $W_2$, $W_3$ and $W_4$ are orthogonal.
\end{lemma}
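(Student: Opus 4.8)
The plan is to realise the two right-hand sides as the eigenspaces of a single involution of $W$, and then to cut each eigenspace in two by a familiar algebraic operation. Define $P\colon W\to W$ by $(P\alpha)(x,y,z)=\alpha(Jx,Jy,z)$. First I would check that $P$ maps $W$ into $W$ and that $P^2=\id$: since $J^2=-\id$ and each slot is linear, $(P^2\alpha)(x,y,z)=\alpha(-x,-y,z)=\alpha(x,y,z)$. Writing $Je_i=\sum_a g(Je_i,e_a)e_a$ and using the skew-symmetry $g(Je_i,e_a)=-g(e_i,Je_a)$ together with the orthogonality of $J$, a short reindexing of \eqref{Winnerproduct} gives $\langle P\alpha,\beta\rangle=\langle\alpha,P\beta\rangle$, so $P$ is self-adjoint. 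Consequently $W$ splits orthogonally into the $(+1)$-eigenspace $W^{+}=\{\alpha\mid\alpha(x,y,z)-\alpha(Jx,Jy,z)=0\}$ and the $(-1)$-eigenspace $W^{-}=\{\alpha\mid\alpha(x,y,z)+\alpha(Jx,Jy,z)=0\}$, which are precisely the two sets in the statement. In particular the orthogonality of $W^{+}$ against $W^{-}$ — hence of $W_1,W_2$ against $W_3,W_4$ once the inclusions below are in place — is automatic.

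Next I would locate each $W_i$ inside the correct eigenspace. For $W_1$, polarising $\alpha(x,x,z)=0$ yields antisymmetry in the first two slots, which together with the defining antisymmetry in the last two makes every $\alpha\in W_1$ totally antisymmetric; for such $\alpha$ a cyclic relabelling turns $\alpha(x,y,z)=-\alpha(x,Jy,Jz)$ into $\alpha(Jx,Jy,z)=-\alpha(x,y,z)$, so $W_1\subseteq W^{-}$. The membership $W_3\subseteq W^{+}$ is immediate from the definition of $W_3$, and $W_4\subseteq W^{+}$ is a direct substitution into the model formula for $W_4$, using $g(Ja,Jb)=g(a,b)$ and $\bar\alpha(J^2\cdot)=-\bar\alpha(\cdot)$ on the trace covector $\bar\alpha$.

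The inclusion $W_2\subseteq W^{-}$ is the step I expect to be the main obstacle, since it is the only one that genuinely mixes the two antisymmetries, the $J$-identity and the cyclic condition. My plan is to derive, for $\alpha\in W_2$, the auxiliary identity $(P\alpha)(x,y,z)=\alpha(y,z,x)-(P\alpha)(z,x,y)$: starting from $(P\alpha)(x,y,z)=\alpha(Jx,Jy,z)=\alpha(Jx,y,Jz)$ via the $J$-identity, I apply the cyclic condition to the triple $(Jx,y,Jz)$ and simplify the two resulting terms again by the $J$-identity and the definition of $P$. Writing $p_i$ and $a_i$ for the cyclic values of $P\alpha$ and $\alpha$, this reads $p_1=a_2-p_3$, $p_2=a_3-p_1$, $p_3=a_1-p_2$; solving this $3\times3$ system together with $a_1+a_2+a_3=0$ forces $p_i=-a_i$, that is $P\alpha=-\alpha$, whence $W_2\subseteq W^{-}$.

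Finally I would upgrade the inclusions to the asserted equalities and settle the remaining orthogonality. For $W^{-}=W_1\oplus W_2$, I use that the total antisymmetrisation $\mathcal A=\tfrac13(\text{cyclic sum})$ is an orthogonal projection on the tensor space; restricted to $W^{-}$ its image lands in $W_1$ and its kernel is $W_2$, so each $\alpha\in W^{-}$ splits as $\mathcal A\alpha+(\alpha-\mathcal A\alpha)\in W_1+W_2$, while $W_1\cap W_2=0$ because a totally antisymmetric tensor with vanishing cyclic sum is zero, and $W_1\perp W_2$ because $\mathcal A$ is an orthogonal projection. For $W^{+}=W_3\oplus W_4$, given $\alpha\in W^{+}$ I subtract the $W_4$-model tensor built from its own trace $\bar\alpha$; the constant $\tfrac{1}{2(n-1)}$ is chosen precisely so that this model shares the trace of $\alpha$, so $\alpha$ minus it is trace-free and still in $W^{+}$, i.e. lies in $W_3$. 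Here $W_3\cap W_4=0$ since a trace-free $W_4$-tensor vanishes, and $W_3\perp W_4$ reduces, after contracting the explicit $W_4$-formula against a trace-free element, to a multiple of $\bar\alpha=0$. Combining these facts yields both displayed identities and the pairwise orthogonality of $W_1,W_2,W_3,W_4$.
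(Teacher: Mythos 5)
Your proof is correct, and it takes a genuinely different route from the paper's. The paper works entirely by hand: it proves $W_1\perp W_2$ by a combinatorial argument on basis triples, $W_3\perp W_4$ by contracting the $W_4$-model formula against a trace-free tensor, the two set equalities by double inclusion with explicit splitting formulas, and the orthogonality of the two large summands by reindexing the inner-product sum over a $J$-adapted basis. You instead organise everything around the single operator $(P\alpha)(x,y,z)=\alpha(Jx,Jy,z)$: once $P$ is known to be a self-adjoint involution of $W$, the two sets in the statement are its eigenspaces $W^{-}$ and $W^{+}$, their mutual orthogonality is automatic, and what remains is to place each $W_i$ in the correct eigenspace (your linear system $p_1=a_2-p_3$, $p_2=a_3-p_1$, $p_3=a_1-p_2$, which together with $a_1+a_2+a_3=0$ forces $p_i=-a_i$, is a tidy equivalent of the paper's substitution chain for $W_2$) and to split each eigenspace, which you do with the antisymmetrisation projector $\mathcal{A}$ on $W^{-}$ and by trace subtraction on $W^{+}$, the latter exactly as in the paper. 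Besides being basis-free, your route has a concrete advantage: your splitting of $W^{-}$ is correct where the paper's is not. The paper splits $\alpha\in W^{-}$ using $\alpha_1(x,y,z)=\tfrac12(\alpha(x,y,z)-\alpha(y,x,z))$ and $\alpha_2(x,y,z)=\tfrac12(\alpha(x,y,z)-\alpha(y,z,x))$, but these tensors are in general not antisymmetric in their last two arguments and hence do not lie in $W$ at all; indeed, in dimension four, where $W_1=\{0\}$, the claim $\alpha_1\in W_1$ would force every $\alpha\in W_2$ to be symmetric in its first two slots, and combined with the antisymmetry in the last two slots this gives $\alpha=0$, contradicting $W_2\neq\{0\}$. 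Your decomposition $\alpha=\mathcal{A}\alpha+(\alpha-\mathcal{A}\alpha)$ avoids this defect. Two points to make explicit when writing it up: the verification that $\mathcal{A}\alpha$ satisfies the $J$-symmetry defining $W$ genuinely uses $\alpha\in W^{-}$ and not merely $\alpha\in W$ (so the projector argument must be run inside the eigenspace, as you indeed state), and the routine checks that $P$ preserves $W$, that the $W_4$-model tensor lies in $W\cap W^{+}$, and that its trace equals $\bar\alpha$ (this is where the constant $\tfrac{1}{2(n-1)}$ is used) should be carried out in full.
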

\begin{proof}
We first show that the sets $W_1$ and $W_2$ are orthogonal. Let $\alpha_1\in W_1$ and $\alpha_2\in W_2$. Then $\alpha_1(x,y,z)=-\alpha_1(y,x,z)$ for all $x,y,z\in V$. From this, we can show that $\alpha_1(x,y,z)=\alpha_1(z,x,y)=\alpha_1(y,z,x)$. The definition of the set $W$ gives $\alpha_1(x,y,y)=\alpha_2(x,y,y)=0$ and from this we see that $\alpha_1(x,y,z)\alpha_2(x,y,z)=0$ whenever at least two of the variables are equal. We also get
\begin{eqnarray*}
    &&\alpha_1(x,y,z)\alpha_2(x,y,z)+\alpha_1(y,z,x)\alpha_2(y,z,x)+\alpha_1(z,x,y)\alpha_2(z,x,y)\\
    &=&
    \alpha_1(x,y,z)(\alpha_2(x,y,z)+\alpha_2(y,z,x)+\alpha_2(z,x,y))
    \\&=&0.
\end{eqnarray*}
Since $\alpha_1(x,y,z)\alpha_2(x,y,z)=\alpha_1(x,z,y)\alpha_2(x,z,y)$, we see that the sum of all of all the possible combinations of $\alpha_1(x,y,z)\,\alpha_2(x,y,z)$ for any distinct $x,y,z\in V$ is zero.
If $e_i\neq e_j\neq e_k$, all of its combinations show up in the sum in Equation \eqref{Winnerproduct}.
Thus $\langle\alpha_1,\alpha_2\rangle=0$. From this we see that $W_1$ and $W_2$ are orthogonal.

We next show that the sets $W_3$ and $W_4$ are orthogonal. To do this, we need to construct an orthonormal basis for $V$. By Remark \ref{basisJ} we can construct an ortonormal basis  $\{e_1,e_2,\dots,e_{2n-1},e_{2n}\}$ where $J(e_{2k-1})=e_{2k}$ and $J(e_{2k})=-e_{2k-1}$.

Let $\alpha_4\in W_4$. Then $$\alpha_4(x,y,z)=
    \frac{1}{2(n-1)}(g(x,y)\Bar{\alpha_4}(z)-g(x,z)\Bar{\alpha_4}(y)-g(x,Jy)\Bar{\alpha_4}(Jz)+g(x,Jz)\Bar{\alpha_4}(Jy))$$ for any $x,y,z\in V$. If $\alpha_3\in W$ one can show that
\begin{eqnarray*}
\langle\alpha_4,\alpha_3\rangle&=&\frac{1}{2(n-1)}\sum_{k=1}^{2n}(\Bar{\alpha_4}({e_k})\Bar{\alpha_3}(e_k)+\Bar{\alpha_4}(J{e_k})\Bar{\alpha_3}(Je_k))\\
    &&+\frac{1}{2(n-1)}\sum_{j=1}^{2n}(\Bar{\alpha_4}({e_j})\Bar{\alpha_3}(e_j)+\Bar{\alpha_4}(J{e_j})
    \Bar{\alpha_3}(Je_j))).
\end{eqnarray*}
This scalar product vanishes for any $\alpha_3\in W_3$, since $\Bar{\alpha_3}(z)=0$ for any $z\in V$ by definition. We conclude that the sets $W_3$ and $W_4$ are orthogonal.

With the above we have shown that $W_1\cap W_2=W_2\cap W_4=\{0\}$, which motivates the use of direct sums.

Let
\begin{equation*}
    U_1=\{\alpha\in W\,|\,\alpha(x,y,z)+\alpha(Jx,Jy,z)=0\textrm{ for all }x,y,z\in V\}.
\end{equation*}
We first prove that $ W_1\oplus W_2$ is a subset of $U_1$. Let $\alpha_1\in W_1$. Then $\alpha_1(x,y,z)=-\alpha_1(y,x,z)$. By using this and the definition of $W$ we get
\begin{equation*}
    \alpha_1(x,y,z)=-\alpha_1(y,x,z)=\alpha_1(y,Jx,Jy)=-\alpha_1(Jx,y,Jz)=-\alpha_1(Jx,Jy,z).
\end{equation*}

If $\alpha_2\in W_2$ then
\begin{equation*}
    \alpha_2(Jx,Jy,z)
    =
    -\alpha_2(z,Jx,Jy)-\alpha_2(Jy,z,Jx)
    =
    \alpha_2(z,x,y)-\alpha_2(Jy,Jz,x).
\end{equation*}
Thus
\begin{eqnarray*}
     \alpha_2(Jx,Jy,z)
    &=&
     \alpha_2(z,x,y)- \alpha_2(Jy,Jz,x)\\
    &=&
     \alpha_2(z,x,y)-( \alpha_2(x,y,z)- \alpha_2(Jz,Jx,y))\\
    &=&
     \alpha_2(z,x,y)- \alpha_2(x,y,z)+ \alpha_2(Jz,Jx,y)\\
    &=&
     \alpha_2(z,x,y)- \alpha_2(x,y,z)+( \alpha_2(y,z,x)- \alpha_2(Jx,Jy,z))\\
    &=&
    ( \alpha_2(z,x,y)+ \alpha_2(y,z,x)+ \alpha_2(x,y,z))\\
    &&-2 \alpha_2(x,y,z)- \alpha_2(Jx,Jy,z)\\
    &=&
    -2 \alpha_2(x,y,z)- \alpha_2(Jx,Jy,z).
\end{eqnarray*}
After rearranging, we get
\begin{equation*}
     \alpha_2(x,y,z)=- \alpha_2(Jx,Jy,z).
\end{equation*}

Thus $ W_1\oplus W_2$ is a subset of $U_1$. We now show that $U_1$ is a subset of $W_1\oplus W_2$. This is true if for each $\alpha\in U_1$ there exists an $\alpha_1\in W_1$ and $\alpha_2\in W_2$ such that $\alpha=\alpha_1+\alpha_2$. Given $\alpha\in U_1$, we can define an $\alpha_1$ by
\begin{equation*}
    \alpha_1(x,y,z)=\frac{1}{2}(\alpha(x,y,z)-\alpha(y,x,z)),
\end{equation*}
for all $x,y,z\in V$ and an $\alpha_2$ by
\begin{equation*}
    \alpha_2(x,y,z)=\frac{1}{2}(\alpha(x,y,z)-\alpha(y,z,x)),
\end{equation*}
for all $x,y,z\in V$. One can easily check that $\alpha_1\in W_1$, $\alpha_2\in W_2$ and $\alpha=\alpha_1+\alpha_2$. Thus $U_1$ is a subset of $W_1\oplus W_2$. We conclude that
\begin{equation*}
     W_1\oplus W_2=\{\alpha\in W\,|\,\alpha(x,y,z)+\alpha(Jx,Jy,z)=0\}.
\end{equation*}

Let
\begin{equation*}
   U_2 =\{\alpha\in W\,|\,\alpha(x,y,z)-\alpha(Jx,Jy,z)=0\textrm{ for all }x,y,z\in V\}.
\end{equation*}
It is easy to see that $ W_3\oplus W_4$ is a subset of $U_2$. Let $\alpha\in U_2$. Then we can define
\begin{equation*}
    \alpha_4(x,y,z)=
    \frac{1}{2(n-1)}(g(x,y)\Bar{\alpha}(z)-g(x,z)\Bar{\alpha}(y)-g(x,Jy)\Bar{\alpha}(Jz)+g(x,Jz)\Bar{\alpha}(Jy))
\end{equation*}
and
\begin{equation*}
    \alpha_3=\alpha(x,y,z)-\alpha_4(x,y,z).
\end{equation*}
Here, a calculation shows that $\Bar{\alpha}=\Bar{\alpha}_4$ and from this it follows that $\Bar{\alpha}_3=0$. Thus $\alpha_3\in W_3$, $\alpha_4\in W_4$ and $U_2$ is a subset of $W_3\oplus W_4$. We conclude that
\begin{equation*}
    W_3\oplus W_4 =\{\alpha\in W\,|\,\alpha(x,y,z)-\alpha(Jx,Jy,z)=0\textrm{ for all }x,y,z\in V\}.
\end{equation*}

Lastly, we want to show that $ W_1\oplus W_2$ and $ W_3\oplus W_4$ are orthogonal. We again let $\{e_1,e_2,\dots,e_{2n-1},e_{2n}\}$  be a set of orthonormal vectors where $J(e_{2k-1})=e_{2k}$ and $J(e_{2k})=-e_{2k-1}$. If $\alpha\in  W_1\oplus W_2$ and $\beta\in  W_3\oplus W_4$ we get $\alpha(x,y,z)\beta(x,y,z)=-\alpha(Jx,Jy,z)\beta(Jx,Jy,z)$. This and the definition of our orthonormal basis gives
\begin{eqnarray*}
    \langle\alpha,\beta\rangle
    &=&
    \sum_{i,j,k=1}^{2n}\alpha(e_i,e_j,e_k)\beta(e_i,e_j,e_k)\\
    &=&
    \sum_{k=1}^{2n}
    \sum_{i,j=1}^{n}
    (\alpha(e_{2i},e_{2j},e_k)\beta(e_{2i},e_{2j},e_k)
    +
    \alpha(e_{2i-1},e_{2j},e_k)\beta(e_{2i-1},e_{2j},e_k)\\
    &&+
    \alpha(e_{2i},e_{2j-1},e_k)\beta(e_{2i},e_{2j-1},e_k)
    +
    \alpha(e_{2i-1},e_{2j-1},e_k)\beta(e_{2i-1},e_{2j-1},e_k
    ))\\
    &=&
    \sum_{k=1}^{2n}
    \sum_{i,j=1}^{n}
    (\alpha(e_{2i},e_{2j},e_k)\beta(e_{2i},e_{2j},e_k)
    +
    \alpha(e_{2i-1},e_{2j},e_k)\beta(e_{2i-1},e_{2j},e_k)\\
    &&
    -\alpha(e_{2i-1},e_{2j},e_k)\beta(e_{2i-1},e_{2j},e_k)
    -
    \alpha(e_{2i},e_{2j},e_k)\beta(e_{2i},e_{2j},e_k))\\
    &=&0.
\end{eqnarray*}

Thus the sets $ W_1\oplus W_2$ and $ W_3\oplus W_4$ are orthogonal. It follows that all of the sets $W_1$, $W_2$, $W_3$ and $W_4$ are orthogonal.
\end{proof}

\begin{lemma}[{\cite[Theorem 2.1]{Gra-Her}}]\label{AllPlus}
The sets $W_1$, $W_2$, $W_3$ and $W_4$ are orthogonal and
$$W=W_1\oplus W_2\oplus W_3\oplus W_4.$$
\end{lemma}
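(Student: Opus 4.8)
The plan is to reduce everything to Lemma~\ref{PlusLemma}, after which only one short step remains. Lemma~\ref{PlusLemma} already establishes that $W_1,W_2,W_3,W_4$ are mutually orthogonal and provides the two intrinsic descriptions
$$U_1 := \{\alpha \in W \mid \alpha(x,y,z) + \alpha(Jx,Jy,z) = 0 \text{ for all } x,y,z \in V\} = W_1 \oplus W_2,$$
$$U_2 := \{\alpha \in W \mid \alpha(x,y,z) - \alpha(Jx,Jy,z) = 0 \text{ for all } x,y,z \in V\} = W_3 \oplus W_4.$$
Consequently the orthogonality assertion of the present lemma is immediate, and the only thing left to prove is the global decomposition $W = U_1 \oplus U_2$.

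To obtain this I would introduce the linear operator $T$ on trilinear forms defined by $(T\alpha)(x,y,z) = \alpha(Jx,Jy,z)$. First I would verify that $T$ maps $W$ into $W$: using the two defining symmetries of $W$ (antisymmetry in the last two arguments and the relation $\alpha(x,y,z) = -\alpha(x,Jy,Jz)$) together with $J^2 = -\id$, a short computation shows that $T\alpha$ again satisfies both relations. Next, since $J^2 = -\id$ and $\alpha$ is trilinear, one has $(T^2\alpha)(x,y,z) = \alpha(J^2 x, J^2 y, z) = \alpha(x,y,z)$, so $T$ is an \emph{involution} of $W$.

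Because $T^2 = \id$, the space $W$ splits as the direct sum of the $(-1)$- and $(+1)$-eigenspaces of $T$ by means of the complementary projections $P_\mp = \tfrac12(\id \mp T)$. Writing $\alpha = P_-\alpha + P_+\alpha$ and checking that $P_-\alpha$ satisfies $\alpha + T\alpha = 0$ while $P_+\alpha$ satisfies $\alpha - T\alpha = 0$, I would identify the $(-1)$-eigenspace with $U_1$ and the $(+1)$-eigenspace with $U_2$. The intersection $U_1 \cap U_2$ is trivial, since a common element obeys both $T\alpha = -\alpha$ and $T\alpha = \alpha$, forcing $\alpha = 0$. Hence $W = U_1 \oplus U_2$, and substituting $U_1 = W_1 \oplus W_2$ and $U_2 = W_3 \oplus W_4$ from Lemma~\ref{PlusLemma} yields $W = W_1 \oplus W_2 \oplus W_3 \oplus W_4$.

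I do not anticipate any real obstacle: the substantive work (orthogonality of the four pieces and the explicit eigenspace-type descriptions of $U_1$ and $U_2$) has already been carried out in Lemma~\ref{PlusLemma}. The only genuinely new point is the small verification that $T$ preserves the symmetry constraints defining $W$; once that is in place, the result is just the standard eigenspace decomposition of an involution.
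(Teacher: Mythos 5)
Your proposal is correct and is essentially the paper's own proof: the paper also reduces everything to Lemma~\ref{PlusLemma} and then splits an arbitrary $\alpha\in W$ as $\alpha_{12}+\alpha_{34}$ with $\alpha_{12}(x,y,z)=\tfrac12(\alpha(x,y,z)-\alpha(Jx,Jy,z))$ and $\alpha_{34}(x,y,z)=\tfrac12(\alpha(x,y,z)+\alpha(Jx,Jy,z))$, which are exactly your projections $P_\mp\alpha$ for the involution $T$. Your explicit check that $T$ preserves the defining symmetries of $W$ is a small point of added rigor (the paper tacitly assumes $\alpha_{12},\alpha_{34}\in W$), but the argument is the same.
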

\begin{proof}
Orthogonality follows from Lemma \ref{PlusLemma}. The same lemma states that
\begin{equation*}
    W_1\oplus W_2=\{\alpha\in W\,|\,\alpha(x,y,z)+\alpha(Jx,Jy,z)=0\textrm{ for all }x,y,z\in V\}.
\end{equation*}
and
\begin{equation*}
   W_3\oplus W_4 =\{\alpha\in W\,|\,\alpha(x,y,z)-\alpha(Jx,Jy,z)=0\textrm{ for all }x,y,z\in V\}.
\end{equation*}
It is clear that $(W_1\oplus W_2)\oplus (W_3\oplus W_4)$ is a subset of $W$.

Let $\alpha\in W$. We can define $\alpha_{12}$ and $\alpha_{34}$ by
\begin{equation*}
    \alpha_{12}(x,y,z)=\frac{1}{2}(\alpha(x,y,z)-\alpha(Jx,Jy,z))
\end{equation*}
and
\begin{equation*}
    \alpha_{34}(x,y,z)=\frac{1}{2}(\alpha(x,y,z)+\alpha(Jx,Jy,z)).
\end{equation*}
Then $\alpha_{12}\in W_1\oplus W_2$, $\alpha_{34}\in W_3\oplus W_4$ and $\alpha=\alpha_{12}+\alpha_{34}$. Thus $$W=(W_1\oplus W_2)\oplus (W_3\oplus W_4)=W_1\oplus W_2\oplus W_3\oplus W_4.$$
\end{proof}

\begin{lemma}[\cite{Gra-Her}]\label{W1W3ZeroLemma}
Let $n=2$. Then $W_1=W_3=\{0\}$.
\end{lemma}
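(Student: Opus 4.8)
The plan is to prove the two assertions $W_1=\{0\}$ and $W_3=\{0\}$ separately, in both cases working in the $J$-adapted orthonormal basis $\{e_1,e_2,e_3,e_4\}$ furnished by Remark \ref{basisJ}, where $Je_1=e_2$, $Je_2=-e_1$, $Je_3=e_4$ and $Je_4=-e_3$. Since an element of $W$ is determined by its components $\alpha(e_i,e_j,e_k)$, each statement reduces to showing that all such components vanish once the extra defining relations are imposed. I expect the $W_1$ case to be short and conceptual (morally, there is no room for a nonzero $(3,0)$-form when the complex dimension is $2$), while the $W_3$ case will need more careful bookkeeping.

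For $W_1$ I would first note that the defining condition $\alpha(x,x,z)=0$ polarizes to antisymmetry in the first two slots, which together with the antisymmetry $\alpha(x,y,z)=-\alpha(x,z,y)$ built into $W$ makes every $\alpha\in W_1$ totally antisymmetric. The decisive dimension-four remark is purely combinatorial: any three-element subset of $\{1,2,3,4\}$ necessarily contains one of the $J$-blocks $\{1,2\}$ or $\{3,4\}$. Hence, for distinct $i,j,k$, I can use total antisymmetry to move such a block $\{2m-1,2m\}$ into the last two slots and apply $\alpha(x,y,z)=-\alpha(x,Jy,Jz)$ to it:
\begin{equation*}
\alpha(e_a,e_{2m-1},e_{2m})=-\alpha(e_a,Je_{2m-1},Je_{2m})=\alpha(e_a,e_{2m},e_{2m-1})=-\alpha(e_a,e_{2m-1},e_{2m}),
\end{equation*}
so the component vanishes. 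As all $\binom{4}{3}=4$ components are of this form, this yields $W_1=\{0\}$.

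For $W_3$ I would exploit its three symmetry conditions in sequence. First, applying $\alpha(x,y,z)=-\alpha(x,Jy,Jz)$ to the last two slots shows that every component whose last two indices lie in a common $J$-block vanishes, so only the ``mixed'' pairs survive, subject to $\alpha(e_i,e_2,e_4)=-\alpha(e_i,e_1,e_3)$ and $\alpha(e_i,e_2,e_3)=\alpha(e_i,e_1,e_4)$. Next I would feed in the $W_3$-relation $\alpha(x,y,z)=\alpha(Jx,Jy,z)$ to eliminate the first index, expressing every surviving component in terms of the four numbers $a_i=\alpha(e_i,e_1,e_3)$; applying this invariance to $\alpha(e_i,e_1,e_3)$ and to $\alpha(e_i,e_1,e_4)$ produces two sets of relations which should be checked to be mutually consistent, identifying $W_3\oplus W_4$ with a four-parameter family. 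Finally I would compute the trace $\Bar\alpha$ on this family and expect to obtain, on the basis vectors,
\begin{equation*}
\Bar\alpha(e_1)=-2a_3,\quad \Bar\alpha(e_2)=2a_4,\quad \Bar\alpha(e_3)=2a_1,\quad \Bar\alpha(e_4)=-2a_2.
\end{equation*}
Since the defining condition of $W_3$ forces $\Bar\alpha=0$, all four parameters vanish, whence $\alpha=0$ and $W_3=\{0\}$.

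The conceptual input amounts to two dimension-four facts — that a three-form cannot avoid a full $J$-block, and that the trace map is injective on $W_3\oplus W_4$ — so the real difficulty lies not in either idea but in the error-prone bookkeeping of how the overlapping symmetry relations (antisymmetry and $J$-(anti)invariance acting on \emph{different}, overlapping pairs of slots) combine in the $W_3$ computation. I would guard against sign mistakes by systematically reducing each component to the chosen representatives $a_i=\alpha(e_i,e_1,e_3)$ before evaluating the trace.
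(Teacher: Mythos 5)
Your proposal is correct and takes essentially the same route as the paper's proof: both work in the $J$-adapted basis of Remark \ref{basisJ}, dispose of $W_1$ via total antisymmetry together with the observation that any three of the four basis vectors must contain a $J$-block, and dispose of $W_3$ by first killing all components whose last two slots lie in a common block and then invoking the trace condition $\Bar{\alpha}=0$. Your intermediate relations and trace formulas ($\Bar{\alpha}(e_1)=-2a_3$, $\Bar{\alpha}(e_2)=2a_4$, $\Bar{\alpha}(e_3)=2a_1$, $\Bar{\alpha}(e_4)=-2a_2$) do check out, the only cosmetic difference being that the paper reduces general mixed components to diagonal ones of the form $\beta(x,x,w)$ instead of parametrizing $W_3\oplus W_4$ by the four numbers $a_i$.
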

\begin{proof}

If $n=2$ the vector space $V$ is of dimension $4$. We can construct an orthonormal basis $\{X,Y,Z,W\}$ where
$$JX=Y,\ JY=-X,\ JZ=W,\ JW=-Z.$$

We will now show that $W_1=\{0\}$. Notice that if $\alpha\in W_1$, then $\alpha(x,y,z)=-\alpha(y,x,z)$ for all $x,y,z\in V$. By using this and looking at the definition of $W$, it is easy to show that
$$\alpha(x,y,z)=-\alpha(x,z,y)=-\alpha(y,x,z)=\alpha(y,z,x)=-\alpha(z,y,x)=\alpha(z,x,y)$$
for all $x,y,z\in V$. If not all $x,y,z$ are distinct, it is therefore easy to see that $\alpha(x,y,z)=0$. Let $x$, $y$ and $z$ be distinct basis vectors. The definition of the set $W$ gives $\alpha(x,y,z)=-\alpha(x,Jy,Jz)$ and we must have $x=\pm Jy$ or $x=\pm Jz$. Thus $\alpha(x,y,z)=0$ for all $x,y,z\in V$ and $W_1=\{0\}$.

Let $\beta\in W_3$. We want to show that $\beta(x,y,z)=0$ for all $x,y,z\in V$. This is true if and only if it holds for all combinations of our given basis vectors.

Since $\beta(x,y,z)=\beta(Jx,Jy,z)$ and $\beta(x,y,y)=0$ for all $x,y,z\in V$ it is clear that $\beta(x,y,z)=0$ whenever $y$ and $z$ are in the same 2-dimensional plane spanned by $\{X,Y\}$ or $\{Z,W\}$. This and the condition $\Bar{\beta}(z)=0$ for all $z\in V$ gives
\begin{eqnarray*}
\Bar{\beta}(X)&=&\beta(Z,Z,X)+\beta(W,W,X)=2\beta(Z,Z,X)=2\beta(W,W,X)=0,\\
\Bar{\beta}(Y)&=&\beta(Z,Z,Y)+\beta(W,W,Y)=2\beta(Z,Z,Y)=2\beta(W,W,Y)=0,\\
\Bar{\beta}(Z)&=&\beta(X,X,Z)+\beta(Y,Y,Z)=2\beta(X,X,Z)=2\beta(Y,Y,Z)=0,\\
\Bar{\beta}(W)&=&\beta(X,X,W)+\beta(Y,Y,W)=2\beta(X,X,W)=2\beta(Y,Y,W)=0.
\end{eqnarray*}
Thus $\beta(x,x,w)=0$ whenever $x$ and $w$ are basis vectors not both in $\{X,Y\}$ or $\{Z,W\}$. We will turn back to this later.

We mentioned earlier that $\beta(x,y,z)=0$ whenever $y$ and $z$ are in the same 2-dimensional plane spanned by $\{X,Y\}$ or $\{Z,W\}$. Suppose that $y$ and $z$ are basis vectors not both in $\{X,Y\}$ or $\{Z,W\}$. If $x$ is a basis vector, then $x$ or $\pm Jx$ must be equal to $y$ or $z$. Since
\begin{equation*}
    \beta(x,y,z)=
    -\beta(Jx,z,Jy)
    =
    -\beta(x,z,y)
    =
    \beta(Jx,y,Jz)
\end{equation*}
we can rearrange so that $\beta(x,y,z)=\pm \beta(x,x,w)$, where $x$ and $w$ are not both in $\{X,Y\}$ or $\{Z,W\}$. We showed earlier that $\beta(x,x,w)=0$ for all such basis vectors. This means that $\beta(x,y,z)=0$ for all $x,y,z\in V$. Thus $W_3=\{0\}$.
\end{proof}

As a direct consequence of our previous lemmas we get the following result.
\begin{proposition}[\cite{Gra-Her}]\label{Split4}
Suppose that $n=2$. Then
\begin{equation*}
    W=W_2\oplus W_4.
\end{equation*}
The sets $W_2$ and $W_4$ are orthogonal.
\end{proposition}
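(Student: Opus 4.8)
The plan is to derive this as an immediate corollary of the two preceding lemmas, so essentially no new work is required. First I would invoke Lemma \ref{AllPlus}, which gives the orthogonal decomposition
\begin{equation*}
W = W_1 \oplus W_2 \oplus W_3 \oplus W_4
\end{equation*}
valid for every $n$, together with the fact that the four summands are mutually orthogonal with respect to the scalar product $\langle\cdot,\cdot\rangle$ defined in \eqref{Winnerproduct}.

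Next I would specialize to the case $n = 2$ and apply Lemma \ref{W1W3ZeroLemma}, which tells us that $W_1 = W_3 = \{0\}$ precisely in this dimension. Substituting these two trivial summands into the decomposition from Lemma \ref{AllPlus} collapses the four-fold direct sum to
\begin{equation*}
W = \{0\} \oplus W_2 \oplus \{0\} \oplus W_4 = W_2 \oplus W_4,
\end{equation*}
which is the first assertion of the proposition.

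For the orthogonality claim, I would simply note that $W_2$ and $W_4$ are two of the four pairwise-orthogonal summands furnished by Lemma \ref{AllPlus}; hence $\langle\alpha,\beta\rangle = 0$ for all $\alpha \in W_2$ and $\beta \in W_4$, and the second assertion follows. Since every nontrivial step has already been carried out in Lemma \ref{PlusLemma}, Lemma \ref{AllPlus} and Lemma \ref{W1W3ZeroLemma}, there is no genuine obstacle here; the only thing worth emphasizing is that the reduction to two summands is exactly what makes the case $n=2$ (dimension four) tractable, and it is \emph{this} proposition that will be applied repeatedly to the $4$-dimensional Lie groups in the subsequent chapters.
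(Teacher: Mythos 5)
Your proposal is correct and matches the paper's own reasoning exactly: the paper also presents Proposition \ref{Split4} as a direct consequence of Lemma \ref{AllPlus} (the orthogonal decomposition $W=W_1\oplus W_2\oplus W_3\oplus W_4$) combined with Lemma \ref{W1W3ZeroLemma} (which gives $W_1=W_3=\{0\}$ when $n=2$), offering no further argument. The only minor caveat is that Lemma \ref{W1W3ZeroLemma} asserts the vanishing of $W_1$ and $W_3$ \emph{for} $n=2$, not \emph{precisely} in that dimension, but this does not affect the validity of your proof.
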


\section{The Case of 4-Dimensional Lie Groups}
Let $(G,g)$ be a Lie group of dimension $2n=4$ with a left-invariant Riemannian metric $g$. We denote by $\g$ its corresponding Lie algebra. Note that $\g$ is isomorphic to the tangent space of $G$ at the unitary element and is a real vector space of dimension $4$. Let $J$ be an almost complex structure that is compatible with the Riemannian metric $g$, meaning $g(Jx,Jy)=g(x,y)$ for all $x,y\in \g$. Thus $(G,J,g)$ is an almost Hermitian manifold.

Let $\omega$ be the K\"{a}hler form of the almost Hermitian manifold $(G,J,g)$ given by $\omega(X,Y)=g(JX,Y)$ for all $X,Y\in\g$. As stated at the beginning of the chapter, the space $W$ consists of tensors with the same symmetries as those of the covariant derivative of a K\"{a}hler form. Thus $\nabla\omega\in W$ for all almost Hermitian manifolds. The class of almost Hermitian manifolds is denoted by $\mathcal{W}$.

We denote the class of almost Hermitian manifolds satisfying $\nabla{\omega}\in W_2$ by $\mathcal{AK}$ and the class satisfying $\nabla{\omega}\in W_4$ is denoted by $\mathcal{I}$. The class of almost Hermitian manifolds satisfying $\nabla\omega\in\{0\}$ is denoted by $\mathcal{K}$.

By Proposition \ref{Split4}, $\nabla\omega\in W=W_2\oplus W_4$ for all 4-dimensional almost Hermitian manifolds. We write $\mathcal{W}=\mathcal{AK}\oplus\mathcal{I}$. We see that an almost Hermitian manifold of dimension $4$ must belong to one of the four classes $\mathcal{W}$, $\mathcal{AK}$, $\mathcal{I}$ or $\mathcal{K}$.

If $\nabla{\omega}\in W_2$ then
\begin{equation*}
    \nabla_X(\omega)(Y,Z)+\nabla_Y(\omega)(Z,X)+\nabla_Z(\omega)(X,Y)=0
\end{equation*}
for all $X,Y,Z\in \g$. By definition,
\begin{eqnarray*}
\nabla_{X}(\omega )({Y},{Z})
&=&
{X}\omega ({Y},{Z})-\omega (\nabla_{X}{Y},{Z})-\omega ({Y},\nabla_{X}{Z})\\
&=&
g (\nabla_{X}{Y},J{Z})-g (J{Y},\nabla_{X}{Z}),
\end{eqnarray*}
where we have used that ${X}g(J{Y},{Z})=0$. This can then be expanded using the Koszul formula for Riemannian Lie groups, stated in for example \cite[Proposition 6.13]{Gud-Riemann}, giving
\begin{equation}\label{nablaXYZ}
  \begin{aligned}
   \nabla_{X}(\omega )({Y},{Z})
&=&
\frac{1}{2}\{g(JZ,\lb XY)+g(\lb {JZ}X,Y)+g(X,\lb {JZ}Y)\}\\
&&
 -\frac{1}{2}\{g(JY,\lb XZ)+g(\lb {JY}X,Z)+g(X,\lb {JY}Z)\}
  \end{aligned}
\end{equation}
and we get
\begin{eqnarray*}
    \nabla_X(\omega)(Y,Z)+
    \nabla_Y(\omega)(Z,X)+
    \nabla_Z(\omega)(X,Y)\\
    =
    g(\lb XY,JZ)+g(\lb YZ,JX)+g(\lb ZX,JY).
\end{eqnarray*}
We observe that this is the same as the exterior derivative $d\omega(X,Y,Z)$ given in Definition \ref{extder} for $k=2$.
Thus $d\omega=0$. From Definition \ref{AlmKahlDef} and Proposition \ref{domegaKahler}, $\mathcal{AK}$ consists of almost K\"{a}hler manifolds.

Before characterizing our set $\I$ we give the following lemma.
\begin{lemma}\label{Idontknow}
Let $N_J$ be the Nijenhuis tensor defined in Definition \ref{NijTensorDef}. For our given 4-dimensional Lie group $(G,g)$, we have that
\begin{equation}\label{LemSe51}
g(JW,N_J(V,U))+g(JV,N_J(U,W))+g(JU,N_J(V,W))=0
\end{equation}
for all $U,V,W\in\g$ if and only if $N_J=0$.
\end{lemma}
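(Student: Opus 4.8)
The backward implication is immediate: if $N_J=0$ then each of the three terms on the left-hand side of \eqref{LemSe51} vanishes. The content of the lemma is the forward implication, and my plan is to reduce the Nijenhuis tensor of $(G,g,J)$ to a single vector in $\g$, and then to recover that vector from the hypothesis by testing \eqref{LemSe51} against well-chosen triples of basis vectors. As a first step I would record the standard algebraic identities for $N_J$ that follow directly from Definition \ref{NijTensorDef}, bilinearity and $J^2=-\id$: namely $N_J(U,V)=-N_J(V,U)$, together with $N_J(JU,V)=N_J(U,JV)=-J\,N_J(U,V)$ and hence $N_J(JU,JV)=-N_J(U,V)$. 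Each of these is a short computation expanding the four brackets in the definition and collecting terms; note that no Jacobi identity is needed, since $N_J$ is built purely from the bracket and $J$.

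Next I would exploit $\dim\g=4$. Since $J$ is compatible with $g$, Remark \ref{basisJ} supplies an orthonormal basis $\{e_1,e_2,e_3,e_4\}$ with $Je_1=e_2$, $Je_2=-e_1$, $Je_3=e_4$, $Je_4=-e_3$. Applying the identities above gives $N_J(e_1,e_2)=N_J(e_1,Je_1)=-J\,N_J(e_1,e_1)=0$ and likewise $N_J(e_3,e_4)=0$, while setting $A:=N_J(e_1,e_3)$ yields $N_J(e_2,e_3)=N_J(e_1,e_4)=-JA$ and $N_J(e_2,e_4)=-A$. Thus $N_J$ is completely determined by the single vector $A\in\g$, and in particular $N_J=0$ if and only if $A=0$.

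It then suffices to show that the hypothesis forces $A=0$. Writing $A=a_1e_1+a_2e_2+a_3e_3+a_4e_4$, I would substitute four suitable basis triples into \eqref{LemSe51}; using the expressions from the previous step together with the compatibility of $J$ with $g$ (so that $g(Je_i,e_j)$ reduces to a single coordinate of $A$), each substitution collapses to one component of $A$ up to a nonzero numerical factor. Concretely, the triples $(U,V,W)$ equal to $(e_1,e_3,e_2)$, $(e_1,e_2,e_4)$, $(e_3,e_1,e_4)$ and $(e_3,e_2,e_4)$ produce, up to sign and a factor of $2$, the equations $a_1=0$, $a_2=0$, $a_3=0$ and $a_4=0$ respectively. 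Hence $A=0$, and therefore $N_J=0$.

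The only delicate point is the bookkeeping in this last step: several of the most symmetric-looking triples cause the left-hand side of \eqref{LemSe51} to cancel identically and yield no information, so one must deliberately pair an index from $\{e_1,e_2\}$ with one from $\{e_3,e_4\}$ in the correct positions to isolate the four coordinates of $A$ as four independent linear equations. Once the identities of the first step are established the individual evaluations are routine, so the proof hinges on the reduction to $A$ together with a correct choice of test triples.
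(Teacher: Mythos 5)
Your proof is correct and takes essentially the same route as the paper's: both reduce $N_J$ via its algebraic $J$-symmetries in an adapted orthonormal basis to four real constants (your vector $A=N_J(e_1,e_3)$, the paper's $k_1,\dots,k_4$ in $N_J(Z,X)$), and then evaluate \eqref{LemSe51} on the four distinct triples of basis vectors to force each constant to vanish. The only cosmetic differences are that you package the constants as a single vector and state the identities $N_J(JU,V)=N_J(U,JV)=-J\,N_J(U,V)$ explicitly, where the paper invokes them implicitly.
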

\begin{proof}
Equation \eqref{LemSe51} clearly holds if $N_J=0$.

Suppose that $\g$ has an orthonormal basis $\{X,Y,Z,W\}$ where
$$JX=Y,\ JY=-X,\ JZ=W,\ JW=-Z.$$
Then $N_{J}(X,Y)=N_{J}(Z,W)=0$ and there exist real constants $k_1$, $k_2$, $k_3$ and $k_4$ such that
\begin{equation*}
\begin{aligned}
    N_{J}(Z,X)
    &=&
    k_1X+k_2Y+k_3 Z+k_4 W=-N_{J}(W,Y)\\
     N_{J}(Z,Y)
    &=&
    k_2 X-k_1 Y+k_4Z-k_3 W=N_{J}(W,X)
\end{aligned}
\end{equation*}
This is because $N_{J}(Z,X)=J\,N_{J}(Z,Y)$ and due to the fact that the Lie brackets become linear combinations of the base vectors.

Suppose that \eqref{LemSe51} holds for any elements in $\g$. Then
\begin{equation*}
    \begin{aligned}
    0&=&
g(J{Z},N_J({X},{Y}))+g(J{X},N_J({Y},{Z}))+g(J{Y},N_J({X},{Z}))=
2\,k_1,\\
0&=&
g(J{W},N_J({X},{Y}))+g(J{X},N_J({Y},{W}))+g(J{Y},N_J({X},{W}))
=
2\,k_2,\\
0&=&
g(J{W},N_J({X},{Z}))+g(J{X},N_J({Z},{W}))+g(J{Z},N_J({X},{W}))
=
2\,k_3,\\
0&=&
g(J{W},N_J({Y},{Z}))+g(J{Y},N_J({Z},{W}))+g(J{Z},N_J({Y},{W}))
=2\,k_4.
    \end{aligned}
\end{equation*}
Thus $k_1=k_2=k_3=k_4=0$ and we get that $N_J=0$.
\end{proof}

If $\nabla{\omega}\in W_4$ then
\begin{equation*}
    \nabla_X(\omega)(Y,Z)=\nabla_{JX}(\omega)(JY,Z)
\end{equation*}
for all $X,Y,Z\in \g$. By using Equation \eqref{nablaXYZ} we get
\begin{eqnarray*}
    0&=&2\nabla_{JX}(\omega)(JY,Z)-2\nabla_X(\omega)(Y,Z)\\
    &=&
g(JZ,N_J(Y,X))+g(JY,N_J(X,Z))+g(JX,N_J(Y,Z)),
\end{eqnarray*}
where $N_J$ is the Nijenhuis tensor defined in Definition \ref{NijTensorDef}. This is true if and only if $N_J=0$ by Lemma \ref{Idontknow}. By Theorem \ref{New-Nir}, $N_J=0$ if and only if $J$ is integrable. Thus $\mathcal{I}$ consists of almost Hermitian manifolds where $J$ is integrable.

The class $\mathcal{K}$ consists of all 4-dimensional almost Hermitian manifolds with $\nabla{\omega}\in\{0\}=W_2\cap W_4$, i.e. manifolds which are both almost K\"{a}hler and whose almost complex structure $J$ is integrable. Thus $\mathcal{K}$ is the class of K\"{a}hler manifold. Notice that $\nabla\omega=0$.

In Chapter \ref{W2W4} we return to the 4-dimensional Lie group introduced in Chapter \ref{AlmostHermitianManifolds} and determine when it belongs to the class $\mathcal{K}$, $\mathcal{AK}$ or $\mathcal{I}$.

\chapter{The Four Classes of 4-Dimensional Lie Groups}\label{W2W4}
We now return to the 4-dimensional Lie group $(G,g)$ given in Chapter \ref{4dimliegr}. Recall the orthonormal basis $\{X,Y,Z,W\}$ of the Lie algebra $\g$ of $G$. We now adapt a left-invariant almost complex structure $J$ to the decomposition $TG=\V\oplus\H$ of the tangent bundle $TG$ so that $\V$ and $\H$ are closed under $J$. Recall that $\V$ is spanned by $\{Z,W\}$ and $\H$ is spanned by $\{X,Y\}$. We get
$$JX=Y,\ JY=-X,\ JZ=W,\ JW=-Z.$$
The almost complex structure $J$ is assumed to be compatible with the Riemannian metric $g$, meaning
$$g(U,V)=g(JU,JV) \ \textrm{ for all } \ U,V\in \g.$$
This makes $(G,J,g)$ an almost Hermitian manifold by Definition \ref{AlmHerMfdDfn}. In Chapter \ref{SpaceW}, we show that $J$ must belong to one of the four classes $\mathcal{W}$, $\mathcal{AK}$, $\mathcal{I}$ or $\mathcal{K}$, where $\mathcal{W}$ consists all almost Hermitian manifolds, $\mathcal{AK}$ consists of all almost K\"{a}hler manifolds, $\mathcal{I}$ consists of all almost Hermitian manifolds where $J$ is integrable and $\mathcal{K}$ consists of all K\"{a}hler manifolds. Since $(G,J,g)$ is an almost Hermitian manifold it always belongs to the class $\mathcal{W}$. We will now determine the necessary conditions for $(G,J,g)$ to belong to the class $\mathcal{AK}$, $\mathcal{I}$ or $\mathcal{K}$.

\begin{remark}
The almost complex structure $J$ is the same as the one denoted by $J_1$ in \cite{Gud-Sve-6}. There, they claim that there are exactly two invariant almost complex structures, namely $J$ and $J_2$, where $J_2$ is defined by
$$J_2\,X=Y,\ J_2\,Y=-X,\ J_2\,Z=-W,\ J_2\,W=Z.$$
We will, however, not look at the almost complex structure $J_2$ since it is simply obtained by looking at $\{-Z,-W\}$ instead of $\{Z,W\}$ for the vertical distribution.
\end{remark}

\section{Integrability}
The Lie group $(G,J,g)$ belongs to $\mathcal{I}$ if and only if $J$ is integrable. By Theorem \ref{New-Nir}, $J$ is integrable if and only if $N_J=0$, where $N_J$ is the Nijenhuis tensor given by Definition \ref{NijTensorDef}. In our case,
\begin{equation}
    N_J(U,V)=
    \left[U,V\right]
    +
    J\left[JU,V\right]
    +
    J\left[U,JV\right]
    -
    \left[JU,JV\right]
\end{equation}
for all $U,V\in\g$. $N_J=0$ if and only if $N_J(U,V)=0$ for all basis vectors $U,V$. Notice that $N_J(U,V)=0$ when $U$ and $V$ are both in $\{X,Y\}$ or $\{Z,W\}$. Since $N_J$ is also anti-symmetric, we only need to look at the cases when $U\in\{Z,W\}$ and $V\in\{X,Y\}$. A quick calculation involving the definition of $J$ shows that
$$N_J(Z,X)=J\,N_J(Z,Y)=-N_J(W,Y)=J\,N_J(W,X).$$
By using the definition of $J$ and the obtained Lie bracket relations in Equation \eqref{LieBraRela}, we get
\begin{eqnarray*}
    N_{J}(Z,X)&=&
    \left[Z,X\right]
    +
    J\left[JZ,X\right]
    +
    J\left[Z,JX\right]
    -
    \left[JZ,JX\right]\\
    &=&
    \left[Z,X\right]
    +
    J\left[W,X\right]
    +
    J\left[Z,Y\right]
    -
    \left[W,Y\right]\\
    &=&
    \cancel{\alpha X}+\cancel{\beta Y}+z_1 Z+w_1 W\\
    &&
    +
    \cancel{a Y}-\cancel{b X}+z_3 W+z_1 Z\\
    &&
    -\cancel{\beta Y}-\cancel{\alpha X}+z_2 W-w_2 Z\\
    &&
    +\cancel{b X}-\cancel{a Y}-z_4 Z+z_2 W\\
    &=&
(2z_1-z_4-w_2)Z+(2z_2+z_3+w_1)W.
\end{eqnarray*}
Thus
\begin{equation}
   2z_2 +z_3 +w_1 =2z_1 -z_4 -w_2 =0
\end{equation}
if and only if $J$ is integrable. This is the same condition as in \cite{Gud-Sve-6} for $J_1$.

\section{Almost K\"{a}hler}
Our almost Hermitian manifold $(G,J,g)$ belongs to the class $\mathcal{AK}$ if and only if it is almost K\"{a}hler.

By Definition \ref{KahlerForm}, the K\"{a}hler form $\omega$ is given by
\begin{equation}\label{AlmKahlOmega}
    \omega(X_1,X_2)=g(JX_1,X_2),
\end{equation}
where $X_1,X_2\in\g$. By Definition \ref{AlmKahlDef} and Proposition \ref{domegaKahler}, $(G,J,g)$ is almost K\"{a}hler if and only if $d\omega=0$, where $d\omega$ is the exterior derivative of $\omega$ given by Definition \ref{extder}. By Example \ref{ExtDerKahlForm} and the definition of the K\"{a}hler form we get
\begin{equation}\label{domega}
    d\omega(X_1,X_2,X_3)
    =
    -
    g(J\left[X_1,X_2\right],X_{3})
    -g(J\left[X_2,X_3\right],X_1)
    -g(J\left[X_3,X_1\right],X_2),
\end{equation}
where $X_1,X_2,X_3\in\g$ are left-invariant vector fields. Note that $X_1(g(X_2,X_3))$ vanishes for $X_1,X_2,X_3$ left-invariant vector fields. 

We have $d\omega=0$ if and only if $d\omega(X_1,X_2,X_3)=0$ for all basis vectors $X_1$, $X_2$ and $X_3$. Clearly, 
$$d\omega(X_1,X_2,X_3)=d\omega(X_2,X_3,X_1)=d\omega(X_3,X_1,X_2)$$ and $$d\omega(X_1,X_2,X_3)=-d\omega(X_2,X_1,X_3)$$
for all $X_1,X_2,X_3\in\g$.
From this, it follows that $d\omega(X_1,X_2,X_3)=0$ if at least two of the vector fields are equal. We conclude that we only need to look at cases for distinct basis vectors and disregard how they are arranged.

By using the definition for $J$, the obtained expressions of the Lie brackets in  Equation \eqref{LieBraRela} and Equation \eqref{domega}, we get
\begin{eqnarray*}
    d\omega(X,Y,Z)
    &=&
    -g(J\left[X,Y\right],Z)
    -g(J\left[Y,Z\right],X)
    -g(J\left[Z,X\right],Y)
    \\
    &=&
    g(J(rX+\theta_1 Z+\theta_2 W),Z)\\
    &&+
    g(J(-\beta X+\alpha Y+z_2 Z+w_2 W),X)\\
    &&-
    g(J(\alpha X +\beta Y+z_1 Z+w_1 W),Y)\\
    &=&
    g(rY+\theta_1 W-\theta_2 Z,Z)\\
    &&+
    g(-\beta Y-\alpha X+z_2 W-w_2 Z,X)\\
    &&-
    g(\alpha Y -\beta X+z_1 W-w_1 Z,Y)\\
    &=&
    -\theta_2
    -
    2\,\alpha.
\end{eqnarray*}
Similarly,
$$
    d\omega(X,Y,W)
    =
    \theta_1
    -2\,a,\ \
    d\omega(X,Z,W)
    =
    0 \ \ \text{and} \ \
    d\omega(Y,Z,W)
    =
    0.
$$
Thus $d\omega=0$ if and only if $\theta_1=2a$ and $\theta_2=-2\alpha$. 

\section{K\"{a}hler}
Recall from Chapter \ref{SpaceW} that $\K$ consists of K\"{a}hler manifolds and is given by the intersection of $\mathcal{I}$ and $\mathcal{AK}$. From our previous calculations we see that $(G,J,g)$ belongs to $\K$ if and only if $$\theta_1-2a=\theta_2+2\alpha=0 \ \ \textrm{and} \ \ 2z_2+z_3+w_1=2z_1-z_4-w_2=0.$$ The space $\K$ is also defined by $\nabla{\omega}=0$. We will quickly show that we can obtain the same result by computing $\nabla{\omega}=0$.

Let $X_1,X_2,X_3\in\g$. Then Equation \eqref{nablaXYZ} gives
\begin{eqnarray*}
   \nabla_{X_1}(\omega )({X_2},{X_3})
&=&
\frac{1}{2}\{g(JX_3,\lb {X_1}{X_2})+g(\lb {JX_3}{X_1},X_2)+g(X_1,\lb {JX_3}{X_2})\}\\
&&
 -\frac{1}{2}\{g(JX_2,\lb {X_1}{X_3})+g(\lb {JX_2}{X_1},X_3)+g(X_1,\lb {JX_2}{X_3})\}.
\end{eqnarray*}
One can easily see that
\begin{eqnarray*}
\nabla_{X_1}(\omega )({X_3},{X_2})
=-\nabla_{X_1}(\omega )({X_2},{X_3}).
\end{eqnarray*}
From this it follows that $\nabla_{X_1}(\omega )({X_3},{X_2})$ vanishes if $X_2$ and $X_3$ are equal.

We find that 
\begin{equation*}
\begin{aligned}
\nabla_{Y}(\omega )({X},{Z})&=-\nabla_{X}(\omega )({Y},{Z})
=
-\nabla_{X}(\omega )({X},{W})
=
-\nabla_{Y}(\omega )({Y},{W})\\
&=
\frac{1}{2}\theta_2+\alpha,
\end{aligned}
\end{equation*}
\begin{equation*}
\begin{aligned}
\nabla_{X}(\omega)({Y},{W})&=-\nabla_{Y}(\omega )({X},{W})
=
-\nabla_{X}(\omega )({X},{Z})
=
-\nabla_{Y}(\omega )({Y},{Z})\\
&=
\frac{1}{2}\theta_1-a,
\end{aligned}
\end{equation*}
\begin{equation*}
\begin{aligned}
\nabla_{Z}(\omega )({X},{W})&=\nabla_{W}(\omega )({X},{Z})
=
-\nabla_{Z}(\omega )({Z},{Y})
=
\nabla_{W}(\omega )({W},{Y})\\
&=
-\frac{1}{2}(2z_1-z_4-w_2)
\end{aligned}
\end{equation*}
and
\begin{equation*}
\begin{aligned}
\nabla_{Z}(\omega)({Y},{W})&=\nabla_{W}(\omega )({Y},{Z})
=
-\nabla_{W}(\omega )({W},{X})
=
\nabla_{Z}(\omega )({Z},{X})\\
&=
-\frac{1}{2}(2z_2+z_3+w_1),
\end{aligned}
\end{equation*}
where we again have used the definition of $J$ and the Lie bracket relations in  Equation \eqref{LieBraRela}. Thus, $\nabla\omega=0$ if and only if 
$$\theta_1-2a=\theta_2+2\alpha=0 \ \ \textrm{and} \ \ 2z_2+z_3+w_1=2z_1-z_4-w_2=0,$$
which we from before know means $(G,J,g)$ is almost K\"{a}hler and integrable.

\chapter{The 20 Families of Lie Algebras}\label{23families}
In Chapter \ref{4dimliegr}, we introduce the Riemannian manifold $(G,g)$, where $G$ is a 4-dimen-sional Lie group and $g$ is a left-invariant Riemannian metric. The Lie algebra of $G$ is denoted by $\g$. We let $K$ be a $2$-dimensional subgroup of $G$ and denote by $\k$ the Lie algebra of $K$. $\m$ is the orthogonal complement of $\k$ with respect to the metric $g$. We then introduce an orthonormal basis $\{X,Y,Z,W\}$ of $\g$ such that $\{Z,W\}$ is an orthonormal basis of $\k$ and $\{X,Y\}$ is an orthonormal basis of $\m$. $\k$ generates an involutive distribution $\V$ and $\m$ generates its orthogonal distribution $\H$. By $\F$, we denote the foliation of codimension $2$ associated to $\V$. We show in Chapter \ref{4dimliegr} that if $\F$ is minimal and conformal, $\g$ must have the Lie bracket relations
\begin{eqnarray*}
\lb WZ&=&\lambda W,\\
\lb ZX&=&\alpha X +\beta Y+z_1 Z+w_1 W,\\
\lb ZY&=&-\beta X+\alpha Y+z_2 Z+w_2 W,\\
\lb WX&=&     a X     +b Y+z_3 Z-z_1W,\\
\lb WY&=&    -b X     +a Y+z_4 Z-z_2W,\\
\lb YX&=&     r X         +\theta_1 Z+\theta_2 W
\end{eqnarray*}
with real coefficients. These relations alone do not necessarily describe a Lie algebra, since a Lie algebra must also fulfil the Jacobi identity. It has been proved in \cite{Gud-Sve-6} that $\g$ must belong to one of $20$ families $\g_1,\dots,\g_{20}$ of Lie algebras specified below.

 Recall the almost complex structure $J$ given by $$JX=Y,\ JY=-X,\ JZ=W,\ JW=-Z.$$ In Chapter \ref{W2W4} we show that $J$ is integrable ($\I$) if and only if $$2z_2+z_3+w_1=2z_1-z_4-w_2=0$$ and almost K\"{a}hler ($\mathcal{AK}$) if and only if $$\theta_1-2a=\theta_2+2\alpha=0.$$ 
The structure $J$ is K\"{a}hler ($\K$) if and only if it is both almost K\"{a}hler and integrable.

In this chapter, we go through all of the 20 families obtained in \cite{Gud-Sve-6} and determine the conditions for each family to be integrable, almost K\"{a}hler or K\"{a}hler. The results are summarized in Table \ref{tab:J1}.

\begin{table}[!h]
 \centering
 
\begin{tabular}{ |  c| C{3cm} | C{3cm}| C{3cm}| } 
  \hline
  Family   & Almost K\"{a}hler, $d\omega=0$ & Integrability, $N_{J}=0$ & K\"{a}hler  \\ 
  \hline \hline
  $\g_1$ &  $w_1=0$ & $w_1=w_2=0$ & $w_1=w_2=0$ \\ 
  \hline
  $\g_2$ & $\alpha=0$ & $w_1=w_2=0$  & $\alpha=w_1=w_2=0$ \\ 
  \hline
  $\g_3$ &  $\theta_2=-2\alpha\neq 0$ & $w_1=w_2=0$  & $\theta_2=-2\alpha\neq 0$ and $w_1=w_2=0$  \\ 
  \hline
  $\g_4$  & $2\lambda^2=z_2w_1$ & $2z_2+w_1=w_2=0$ & never true \\ 
  \hline
  $\g_5$  & $r^2=4(\alpha b-a\beta)$ & $a=\beta$ and $b=-\alpha$ & never true \\ 
  \hline
  $\g_6$  & $\theta_1=\theta_2=0$ & 
  $r={(z_1^2+z_3^2)}/{z_3}$ and $z_2={(z_1^2-z_3^2)}/{2z_3}$ & $\theta_1=\theta_2=0$, $r={(z_1^2+z_3^2)}/{z_3}$ and $z_2={(z_1^2-z_3^2)}/{2z_3}$
  \\ 
  \hline
  $\g_7$  & $\theta_1=\theta_2=0$ & $2z_2+w_1=w_2=0$ & $\theta_1=\theta_2=0$ and $2z_2+w_1=w_2=0$\\ 
  \hline
  $\g_8$  & $\theta_1=\theta_2=0$ & $2z_2=z_4+w_2=0$ & $\theta_1=\theta_2=0$ and $2z_2=z_4+w_2=0$\\ 
  \hline
  $\g_9$  & $\theta_1=\theta_2=0$ & $z_3-r=z_4=0$ & $\theta_1=\theta_2=0$ and $z_3-r=z_4=0$ \\ 
  \hline
  $\g_{10}$  & never true & always true & never true \\ 
  \hline
  $\g_{11}$  & $\theta_1=\theta_2=0$ & never true & never true\\ 
  \hline
  $\g_{12}$  & $\theta_1=\theta_2=0$ & $z_3=-w_1\neq 0$ & $\theta_1=\theta_2=0$ and $z_3=-w_1\neq 0$\\ 
  \hline
  $\g_{13}$  & $\theta_1=\theta_2=0$ & never true & never true \\ 
  \hline
  $\g_{14}$  & $\theta_1=\theta_2=0$ & $z_2=z_4+w_2=0$ & $\theta_1=\theta_2=0$ and $z_2=z_4+w_2=0$\\ 
  \hline
  $\g_{15}$  & never true & $w_1=w_2=0$ & never true \\
  \hline
  $\g_{16}$  & $\theta_1=\theta_2=0$ & $w_1=w_2=0$ & $\theta_1=\theta_2=0$ and $w_1=w_2=0$\\
  \hline
  $\g_{17}$  & never true & $w_1=w_2=0$ & never true \\ 
  \hline
  $\g_{18}$  & $\theta_1=\theta_2=0$ & $z_3=z_4=0$ & $\theta_1=\theta_2=0$ and $z_3=z_4=0$\\ 
  \hline
  $\g_{19}$  & never true & $w_1=w_2=0$ & never true \\ 
  \hline
  $\g_{20}$  & never true & $w_1=w_2=0$ & never true \\ 
  \hline
 \end{tabular} 
\caption[]{Conditions for each Lie algebra family $\g_1,\dots,\g_{20}$ of $(G,J,g)$ to be almost K\"{a}hler, integrable or K\"{a}hler.}
 \label{tab:J1}
\end{table}

\begin{remark}\label{HyperbolicRmrk}
The $2$-dimensional Lie group $\k$ with orthonormal basis $\{Z,W\}$ is flat if $\lambda=0$ and hyperbolic if $\lambda\neq0$. We see this by looking at the sectional curvature defined in for example \cite[Definition 8.14]{Gud-Riemann}. Note that $Z$ and $W$ are orthonormal basis vectors. Let $$\sigma=\mathrm{span}_\R\{Z,W\}.$$
Then the sectional curvature at the identity becomes
$$K(\sigma)=g(R(Z,W)W,Z),$$
where $R$ is the Riemann curvature operator. We get (see \cite[Theorem 8.9]{Gud-Riemann})
$$R(Z,W)W=\nabla_Z\nabla_WW-\nabla_W\nabla_ZW-\nabla_{\lb ZW}W.$$
By using the Koszul formula in \cite[Proposition 6.13]{Gud-Riemann} we see that
$$\nabla_WW=-\lambda Z, \ \nabla_ZZ=0,\ \nabla_ZW=0, \ \textrm{and}\ \nabla_WZ=\lambda W.$$
From this, the sectional curvature becomes
$$K(\sigma)=-\lambda^2.$$
Thus, the 2-dimensional Lie algebra $\k$ either corresponds to the flat Euclidean plane $\R^2$ or the hyperbolic disk $H^2_\lambda$.
\end{remark}

\begin{remark}\label{SemiDirect}
If $\theta_1=\theta_2=0$, Remark \ref{HyperbolicRmrk} applies to the $2$-dimensional space $\m$ with orthonormal basis $\{X,Y\}$. We also get that $\m$ becomes a Lie algebra. If, in addition, either of the Lie algebras $\k$ or $\m$ is ideal in $\g$ (meaning $\lb{\k}{\g}\subset\k$ or $\lb{\m}{\g}\subset\m$), we get a \textit{semidirect product} of Lie algebras and write $\g=\k\ltimes\m$. For more details, see \cite[Chapter 1]{Knapp}.
\end{remark}

\section{Case (A) - ($\lambda\neq 0$ and $(\lambda-\alpha)^2+\beta^2\neq 0$)}
\begin{example}[$\g_{1}(\lambda,r,w_1,w_2)$]\label{exam-A1}
This is a 4-dimensional family obtained by letting $r\neq 0$. By \cite{Gud-Sve-6}, this gives  $\alpha=\beta=0$ and $rw_1=\lambda\theta_2$, resulting in a family of \textit{solvable} Lie algebras given by the Lie bracket relations
\begin{eqnarray*}
\lb WZ&=&\lambda W,\\
\lb ZX&=&w_1 W,\\
\lb ZY&=&w_2 W,\\
\lb YX&=&r X+\frac{rw_1}{\lambda}W.
\end{eqnarray*}

\begin{itemize}
    \item[$\mathcal{AK}:$]
    For $J$ to be almost K\"{a}hler in this family, we must have that $\theta_1-2a=\theta_2+2\alpha=0$. Since $a=\theta_1=\alpha=0$, we only need to consider $\theta_2=0$. But $\lambda\theta_2=rw_1$, so this implies that $rw_1=0$. $r$ is chosen to be nonzero, so we conclude that the family is almost K\"{a}hler if and only if $w_1=0$. We get a 3-dimensional family $\g_1^{\mathcal{AK}}
(\lambda,r,w_2)$ given by
\begin{eqnarray*}
\lb WZ&=&\lambda W,\\
\lb ZY&=&w_2 W,\\
 \lb YX&=& r X.
\end{eqnarray*}
Here, $\m$ is a Lie algebra and $\k$ is ideal in $\g$. By Remark \ref{HyperbolicRmrk} and \ref{SemiDirect} the family corresponds to a semidirect product $H_\lambda^2\ltimes H_r^2$ of hyperbolic disks.
    \item[$\mathcal{I}:$]
    $J$ is integrable if and only if $2z_2+z_3+w_1=2z_1-z_4-w_2=0.$ Since $$z_1=z_2=z_3=z_4=0$$ here, we get the condition $w_1=w_2=0$ and obtain a 2-dimensional family $\g_1^{\mathcal{I}}
(\lambda,r)$ given by
\begin{eqnarray*}
\lb WZ&=&\lambda W,\\
\lb YX&=& r X.
\end{eqnarray*}

In view of Remark \ref{HyperbolicRmrk}, we notice that this family corresponds to a product of two hyperbolic disks $H_\lambda^2$ and $H_r^2$.

    \item[$\mathcal{K}:$] 
    The family $\g_1^{\mathcal{K}}(\lambda,r)=\g_1^{\mathcal{I}}(\lambda,r)$ is contained in $\g_1^{\mathcal{AK}}(\lambda,r,w_2)$, thus K\"{a}hler.
\end{itemize}

\end{example}

\begin{example}[$\g_{2}(\lambda,\alpha,\beta,w_1,w_2)$]\label{exam-A2}
Here, $r=\theta_1=\theta_2=0$, resulting in a 5-dimensional family of \textit{solvable} Lie algebras with the Lie bracket relations
\begin{eqnarray*}
\lb WZ&=&\lambda W,\\
\lb ZX&=&\alpha X+\beta Y+w_1 W,\\
\lb ZY&=&-\beta X+\alpha Y+w_2 W.
\end{eqnarray*}

\begin{itemize}
    \item[$\mathcal{AK}:$] 
    Since $\theta_1=\theta_2=a=0$, this family is almost K\"{a}hler if and only if $\alpha=0.$ This gives a 4-dimensional family $\g_{2}^{\mathcal{AK}}(\lambda,\beta,w_1,w_2)$ with the Lie bracket relations
\begin{eqnarray*}
\lb WZ&=&\lambda W,\\
\lb ZX&=&\beta Y+w_1 W,\\
\lb ZY&=&-\beta X+w_2 W.
\end{eqnarray*}
    \item[$\mathcal{I}:$] 
    Since $z_1=z_2=z_3=z_4=0$, $J$ is integrable if and only if $w_1=w_2=0,$ which gives a 3-dimensional family
    $\g_{2}^{\mathcal{I}}(\lambda,\alpha,\beta)$ given by
\begin{eqnarray*}
\lb WZ&=&\lambda W,\\
\lb ZX&=&\alpha X+\beta Y,\\
\lb ZY&=&-\beta X+\alpha Y.
\end{eqnarray*}
    \item[$\mathcal{K}:$]
    The family is K\"{a}hler if and only if it is both almost K\"{a}hler and integrable i.e. $\alpha=w_1=w_2=0.$
    We get a 2-dimensional family
    $\g_{2}^{\mathcal{K}}(\lambda,\beta)$
    given by
    \begin{eqnarray*}
\lb WZ&=&\lambda W,\\
\lb ZX&=&\beta Y,\\
\lb ZY&=&-\beta X.
\end{eqnarray*}
This corresponds to a semidirect product $H^2_\lambda\ltimes\R^2$.
\end{itemize}

\end{example}

\begin{example}[$\g_{3}(\alpha,\beta,w_1,w_2,\theta_2)$]\label{exam-A3}
Here $r=\theta_1=0,$ $\theta_2\neq 0$ and $\lambda=-2\alpha$, resulting in a 5-dimensional family of \textit{solvable} Lie algebras with the relations
\begin{eqnarray*}
\lb WZ&=&-2\alpha W,\\
\lb ZX&=&\alpha X+\beta  Y+w_1 W,\\
\lb ZY&=&-\beta  X+\alpha Y+w_2 W,\\
\lb YX&=&\theta_2W.
\end{eqnarray*}
\begin{itemize}
    \item[$\mathcal{AK}:$]
    Here $a=\theta_1=0,$ $\lambda=-2\alpha\neq0$ and $\theta_2\neq0.$ Thus, the family is almost K\"{a}hler if and only if $\theta_2=-2\alpha=\lambda\neq 0,$ giving a family 
    $\g_{3}^{\mathcal{AK}}(\alpha,\beta,w_1,w_2)$ with
\begin{eqnarray*}
\lb WZ&=&-2\alpha W,\\
\lb ZX&=&\alpha X+\beta  Y+w_1 W,\\
\lb ZY&=&-\beta  X+\alpha Y+w_2 W,\\
\lb YX&=&-2\alpha W.
\end{eqnarray*}
    \item[$\mathcal{I}:$] Since $z_1=z_2=z_3=z_4=0,$ we see that $J$ is integrable if and only if $w_1=w_2=0,$ giving a family
    $\g_{3}^{\mathcal{I}}(\alpha,\beta,\theta_2)$ with
\begin{eqnarray*}
\lb WZ&=&-2\alpha W,\\
\lb ZX&=&\alpha X+\beta  Y,\\
\lb ZY&=&-\beta  X+\alpha Y,\\
\lb YX&=&\theta_2W.
\end{eqnarray*}
    \item[$\mathcal{K}:$] The family is K\"{a}hler if and only if
$\theta_2=-2\alpha\neq 0$ and $ w_1=w_2=0.$
We get a family $\g_{3}^\mathcal{K}(\alpha,\beta)$ with
    \begin{eqnarray*}
\lb WZ&=&-2\alpha W,\\
\lb ZX&=&\alpha X+\beta  Y,\\
\lb ZY&=&-\beta  X+\alpha Y,\\
\lb YX&=&-2\alpha W.
\end{eqnarray*}
\end{itemize}

\end{example}

\section{Case (B) - ($\lambda\neq 0$ and $(\lambda-\alpha)^2+\beta^2=0$)}

\begin{example}[$\g_{4}(\lambda,z_2,w_1,w_2)$]\label{exam-B1}
This is a 4-dimensional family of \textit{solvable} Lie algebras given by the Lie bracket relations
\begin{eqnarray*}
\lb WZ&=&\lambda W,\\
\lb ZX&=&\lambda X+w_1 W,\\
\lb ZY&=&\lambda Y+z_2Z+w_2 W,\\
\lb WY&=&-z_2W,\\
\lb YX&=&-z_2 X-\frac{z_2w_1}{\lambda}W.
\end{eqnarray*}

\begin{itemize}
    \item[$\mathcal{AK}:$] Since $\theta_1=a=0,$ $\alpha=\lambda\neq0$ and $\lambda\theta_2=-z_2 w_1,$ the family is in the almost K\"{a}hler class if and only if $\theta_2=-2\alpha=-2\lambda.$ This implies that $2\lambda^2=w_1z_2$ and we get a 3-dimensional family
    $\g_{4}^{\mathcal{AK}}(\lambda,z_2,w_2)$
    with the following Lie bracket relations
\begin{eqnarray*}
\lb WZ&=&\lambda W,\\
\lb ZX&=& \lambda X+\frac{2\lambda^2}{z_2} W,\\
\lb ZY&=&\lambda Y+z_2Z+w_2 W,\\
\lb WY&=&-z_2W,\\
\lb YX&=&-z_2 X-2\lambda W.
\end{eqnarray*}
    \item[$\mathcal{I}:$] Since $$z_1=z_3=z_4=0,$$ $J$ is integrable in this family if and only if $$2z_2+w_1=w_2=0.$$ This gives a 2-dimensional family $\g_{4}^{\mathcal{I}}(\lambda,z_2)$
    with relations
\begin{eqnarray*}
\lb WZ&=&\lambda W,\\
\lb ZX&=&\lambda X-2z_2 W,\\
\lb ZY&=&\lambda Y+z_2Z,\\
\lb WY&=&-z_2W,\\
\lb YX&=&-z_2 X+\frac{2z_2^2}{\lambda}W.
\end{eqnarray*}
    \item[$\mathcal{K}:$] 
The family
can not be K\"{a}hler, since this requires $$\lambda^2+z_2^2=0,$$ which has no real solutions for $\lambda\neq 0$.
\end{itemize}

\end{example}

\section{Case (C) - ($\lambda=0$, $r\neq 0$ and $(a\beta-\alpha b)\neq 0$)}

\begin{example}[$\g_{5}(\alpha,a,\beta,b,r)$]\label{exam-C1}
This 5-dimensional family consists of \textit{solvable} Lie algebras given by the Lie bracket relations
\begin{eqnarray*}
\lb ZX&=&\alpha X +\beta Y
+\frac{r(\beta b-\alpha a)}{2(a\beta-\alpha b)} Z+\frac{r(\alpha^2-\beta^2)}{2(a\beta-\alpha b)} W,\\
\lb ZY&=&-\beta X+\alpha Y
+\frac{r(\alpha b+\beta a)}{2(a\beta-\alpha b)} Z-\frac{r\alpha\beta}{(a\beta-\alpha b)} W,\\
\lb WX&=&     a X     +b Y
+\frac{r(b^2-a^2)}{2(a\beta-\alpha b)} Z+\frac{r(\alpha a-\beta b)}{2(a\beta-\alpha b)}W,\\
\lb WY&=&    -b X     +a Y
+\frac{rab }{(a\beta-\alpha b)} Z-\frac{r(\alpha b+\beta a)}{2(a\beta-\alpha b)}W,\\
\lb YX&=&     r X
-\frac{ar^2}{2(a\beta-\alpha b)} Z+\frac{\alpha r^2}{2(a\beta-\alpha b)} W.
\end{eqnarray*}

\begin{itemize}
    \item[$\mathcal{AK}:$]
    Here, 
$$\theta_1=-\frac{ar^2}{2(a\beta-\alpha b)} \ \ \text{and} \ \ \theta_2=\frac{\alpha r^2}{2(a\beta-\alpha b)}.$$ Thus, the family is in the almost K\"{a}hler class if and only if
$$2a=-\frac{ar^2}{2(a\beta-\alpha b)} \ \ \text{and} \ \ 2\alpha=-\frac{\alpha r^2}{2(a\beta-\alpha b)}.$$
The condition $a\beta-\alpha b\neq0$ shows that not both $a$ and $\alpha$ can be $0$. If at least one is nonzero, the above gives $r^2=4{(\alpha b-a\beta)}>0$. From this we get two solutions of the form
\begin{eqnarray*}
\lb ZX&=&\alpha X +\beta Y
\mp\frac{(\beta b-\alpha a)}{\sqrt{\alpha b-a\beta}} Z
\mp\frac{(\alpha^2-\beta^2)}{\sqrt{\alpha b-a\beta}} W,\\
\lb ZY&=&-\beta X+\alpha Y
\mp\frac{(\alpha b+\beta a)}{\sqrt{\alpha b-a\beta}} Z
\pm\frac{2\alpha\beta}{\sqrt{\alpha b-a\beta}} W,\\
\lb WX&=&     a X     +b Y
\mp\frac{(b^2-a^2)}{\sqrt{\alpha b-a\beta}} Z
\mp\frac{(\alpha a-\beta b)}{\sqrt{\alpha b-a\beta}}W,\\
\lb WY&=&    -b X     +a Y
\mp\frac{2ab }{\sqrt{\alpha b-a\beta}} Z
\pm\frac{(\alpha b+\beta a)}{\sqrt{\alpha b-a\beta}}W,\\
\lb YX&=& \pm 2\sqrt{\alpha b-a\beta}\, X
+2a\, Z
-2\alpha\, W.
\end{eqnarray*}	
The solutions yield two $4$-dimensional families $\g_{5}^{\mathcal{AK},+}(\alpha,a,\beta,b)$ and $\g_{5}^{\mathcal{AK},-}(\alpha,a,\beta,b)$ for $r$ positive or negative, respectively.

    \item[$\mathcal{I}:$] Since 
$$z_1=\frac{r(\beta b-\alpha a)}{2(a\beta-\alpha b)}, \ \
z_2=\frac{r(\alpha b+\beta a)}{2(a\beta-\alpha b)}, \ \ z_3=\frac{r(b^2-a^2)}{2(a\beta-\alpha b)},$$ $$z_4=\frac{rab }{(a\beta-\alpha b)}, \ \ w_1=\frac{r(\alpha^2-\beta^2)}{2(a\beta-\alpha b)} \ \ \text{and} \ \ 
w_2=-\frac{r\alpha\beta}{(a\beta-\alpha b)},$$
we get that $J$ is integrable if and only if 
$$
      2\frac{r(\alpha b+\beta a)}{2(a\beta-\alpha b)}
+\frac{r(b^2-a^2)}{2(a\beta-\alpha b)}
+\frac{r(\alpha^2-\beta^2)}{2(a\beta-\alpha b)}=0$$
and
$$  2\frac{r(\beta b-\alpha a)}{2(a\beta-\alpha b)}-\frac{rab }{(a\beta-\alpha b)}+\frac{r\alpha\beta}{(a\beta-\alpha b)}
=0.$$
Since $r\neq 0$ and $a\beta-\alpha b\neq0$, we can cancel these out. By doing this and rearranging, we get the conditions
$$(a - \beta) (b + \alpha) = 0  \ \ \text{and} \ \ (a-\beta)^2=(b+\alpha)^2.$$
By the first condition, we must have $a=\beta$ or $b=-\alpha$. If we let $a=\beta$ in the second equation we see that we must have $b=-\alpha$ and vice versa. Thus, $J$ is integrable if and only if $a=\beta$ and $b=-\alpha$. This gives a family $\g_{5}^\mathcal{I}(\alpha,\beta,r)$ with
\begin{eqnarray*}
\lb ZX&=&\alpha X +\beta Y
-\frac{r\alpha\beta}{(\beta^2+\alpha^2)} Z+\frac{r(\alpha^2-\beta^2)}{2(\beta^2+\alpha^2)} W,\\
\lb ZY&=&-\beta X+\alpha Y
-\frac{r(\alpha^2-\beta^2)}{2(\beta^2+\alpha^2)} Z-\frac{r\alpha\beta}{(\beta^2+\alpha^2)} W,\\
\lb WX&=&     \beta X     -\alpha Y
+\frac{r(\alpha^2-\beta^2)}{2(\beta^2+\alpha^2)} Z+\frac{r\alpha \beta}{(\beta^2+\alpha^2)}W,\\
\lb WY&=&    \alpha X     +\beta Y
-\frac{r\alpha\beta }{(\beta^2+\alpha^2)} Z+\frac{r(\alpha^2-\beta^2)}{2(\beta^2+\alpha^2)}W,\\
\lb YX&=&     r X
-\frac{\beta r^2}{2(\beta^2+\alpha^2)} Z+\frac{\alpha r^2}{2(\beta^2+\alpha^2)} W.
\end{eqnarray*}
    \item[$\mathcal{K}:$]
    We showed earlier that if the family is almost K\"{a}hler, then $\alpha b-a\beta>0$.
    Suppose that $J$ is integrable. We know that this is true if and only if
    $$a=\beta \ \ \textrm{and} \ \ b=-\alpha.$$
    But then
    $$\alpha b-a\beta=-(\alpha^2+\beta^2).$$
    Since this can not be positive, the family can not also be almost K\"{a}hler. We conclude that the family can not be K\"{a}hler.
\end{itemize}

\end{example}

\section{Case (D) - ($\lambda=0$, $r\neq 0$ and $(a\beta-\alpha b)=0$)}

\begin{example}[$\g_{6}(z_1,z_2,z_3,r,\theta_1,\theta_2)$]\label{exam-D1}
In this family, $z_1^2=-w_1z_3\neq 0$ and
$$z_4 =\frac{z_3(r+2z_2)}{2z_1},\ \ w_1 = -\frac{z_1^2}{z_3},
\ \ w_2 = \frac{z_1(r-2z_2)}{2z_3}.$$
This gives a 6-dimensional family of \textit{solvable} Lie algebras where
\begin{eqnarray*}
\lb ZX&=&z_1 Z-\frac{z_1^2}{z_3} W,\\
\lb ZY&=&z_2 Z+\frac{z_1(r-2z_2)}{2z_3} W,\\
\lb WX&=&z_3 Z-z_1W,\\
\lb WY&=&\frac{z_3(r+2z_2)}{2z_1} Z-z_2W,\\
\lb YX&=&r X+\theta_1 Z+\theta_2 W.
\end{eqnarray*}

\begin{itemize}
    \item[$\mathcal{AK}:$] Since $a=\alpha=0$ the above is almost K\"{a}hler if and only if $\theta_1=\theta_2=0,$ giving a $4$-dimensional family $\g_{6}^{\mathcal{AK}}(z_1,z_2,z_3,r)$ with
    the relations
\begin{eqnarray*}
\lb ZX&=&z_1 Z-\frac{z_1^2}{z_3} W,\\
\lb ZY&=&z_2 Z+\frac{z_1(r-2z_2)}{2z_3} W,\\
\lb WX&=&z_3 Z-z_1W,\\
\lb WY&=&\frac{z_3(r+2z_2)}{2z_1} Z-z_2W,\\
\lb YX&=&r X.
\end{eqnarray*}
We see that this corresponds to a semidirect product $H_r^2\ltimes\R^2$.

    \item[$\mathcal{I}:$] We have 
$$z_1\neq0, \ \ z_4=\frac{z_3(r+2z_2)}{2z_1}, \ \ w_1=-\frac{z_1^2}{z_3} \ \ \text{and} \ \ w_2=\frac{z_1(r-2z_2)}{2z_3}.$$ Thus, $J$ is integrable if and only if
$$2z_2+z_3-\frac{z_1^2}{z_3}=0\ \ \text{and} \ \
2z_1-\frac{z_3(r+2z_2)}{2z_1}-\frac{z_1(r-2z_2)}{2z_3}=0.$$
After multiplying the first equation by $z_3$ and the second one by $z_1z_3$ and rearranging, we get that
$$z_1^2=(2z_2+z_3)z_3 \ \ \text{and} \ \
z_1^2(4z_3-r+2z_2)=z_3^2(r+2z_2).$$
By multiplying the first equation by $4z_3-r+2z_2$, we see that these two conditions correspond to
\begin{equation}
      (2z_2+z_3)(4z_3-r+2z_2)=z_3(r+2z_2),
\end{equation}
which can be rewritten as
\begin{equation}
    (z_2+z_3)(2(z_2+z_3)-r)=0.
\end{equation}
If $z_2+z_3=0$, our earlier conditions give $z_1^2+z_3^2=0$, which is not possible for nonzero real constants. Thus, we must have $z_2+z_3\neq0$. This leaves us with $r=2(z_2+z_3)$. We conclude that $J$ is integrable if and only if 
$$r=\frac{z_1^2+z_3^2}{z_3} \ \ \text{and} \ \ z_2=\frac{z_1^2-z_3^2}{2z_3},$$ giving a 4-dimensional family
$\g_{6}^\mathcal{I}(z_1,z_3,\theta_1,\theta_2)$
with relations
\begin{eqnarray*}
\lb ZX&=&z_1 Z-\frac{z_1^2}{z_3} W,\\
\lb ZY&=&\frac{z_1^2-z_3^2}{2z_3} Z+z_1 W,\\
\lb WX&=&z_3 Z-z_1W,\\
\lb WY&=&z_1 Z-\frac{z_1^2-z_3^2}{2z_3}W,\\
\lb YX&=&\frac{z_1^2+z_3^2}{z_3} X+\theta_1 Z+\theta_2 W.
\end{eqnarray*}
    \item[$\mathcal{K}:$] 
    The family is K\"{a}hler if and only if
    $$\theta_1=\theta_2=0, \ \ r=\frac{z_1^2+z_3^2}{z_3} \ \ \text{and} \ \ z_2=\frac{z_1^2-z_3^2}{2z_3},$$
    giving a 2-dimensional family
$\g_{6}^\mathcal{K}(z_1,z_3)$
with relations
\begin{eqnarray*}
\lb ZX&=&z_1 Z-\frac{z_1^2}{z_3} W,\\
\lb ZY&=&\frac{z_1^2-z_3^2}{2z_3} Z+z_1 W,\\
\lb WX&=&z_3 Z-z_1W,\\
\lb WY&=&z_1 Z-\frac{z_1^2-z_3^2}{2z_3}W,\\
\lb YX&=&\frac{z_1^2+z_3^2}{z_3} X.
\end{eqnarray*}
This corresponds to a semidirect product $H_r^2\ltimes\R^2$.
\end{itemize}

\end{example}

\begin{example}[$\g_{7}(z_2,w_1,w_2,\theta_1,\theta_2)$]\label{exam-D2}
In this family, $$z_1=z_3=z_4=0, \ \ r=2z_2 \ \ \textrm{and} \ \ w_1\neq 0,$$ resulting in a 5-dimensional family of \textit{solvable} Lie algebras of the form
\begin{eqnarray*}
\lb ZX&=&w_1 W,\\
\lb ZY&=&z_2 Z+w_2 W,\\
\lb WY&=&-z_2 W,\\
\lb YX&=&2z_2 X+\theta_1 Z+\theta_2 W.
\end{eqnarray*}

\begin{itemize}
    \item[$\mathcal{AK}:$] Since $\alpha=a=0,$ one can easily see that the family is in the almost K\"{a}hler class if and only if $\theta_1=\theta_2=0,$ which gives a 3-dimensional family
    $\g_{7}^{\mathcal{AK}}(z_2,w_1,w_2)$
    with Lie bracket relations
\begin{eqnarray*}
\lb ZX&=&w_1 W,\\
\lb ZY&=&z_2 Z+w_2 W,\\
\lb WY&=&-z_2 W,\\
\lb YX&=&2z_2 X,
\end{eqnarray*}
which correspond to a semidirect product $H_{2z_2}^2\ltimes\R^2$.
    \item[$\mathcal{I}:$] We have $z_1=z_3=z_4=0.$ Thus, $J$ is integrable if and only if $2z_2+w_1=w_2=0$ and we get a 3-dimensional family
 $\g_{7}^{\mathcal{I}}(z_2,\theta_1,\theta_2)$
 given by
\begin{eqnarray*}
\lb ZX&=&-2z_2 W,\\
\lb ZY&=&z_2 Z,\\
\lb WY&=&-z_2 W,\\
\lb YX&=&2z_2 X+\theta_1 Z+\theta_2 W.
\end{eqnarray*}
Note that $z_2\neq0$.
    \item[$\mathcal{K}:$]
    The family is K\"{a}hler if and only if
    $\theta_1=\theta_2=0,$ $w_1=-2z_2\neq0$ and $w_2=0,$
    giving a $1$-dimensional family
 $\g_{7}^{\mathcal{K}}(z_2)$ with
\begin{eqnarray*}
\lb ZX&=&-2z_2 W,\\
\lb ZY&=&z_2 Z,\\
\lb WY&=&-z_2 W,\\
\lb YX&=&2z_2 X.
\end{eqnarray*}
This corresponds to a semidirect product $H_{2z_2}^2\ltimes\R^2$.
\end{itemize}
\end{example}
\begin{example}[$\g_{8}(z_2,z_4,w_2,r,\theta_1,\theta_2)$]\label{exam-D3}
Here we have $z_1=z_3=w_1=0,$ providing a 6-dimensional family of \textit{solvable} Lie algebras with
\begin{eqnarray*}
\lb ZY&=&z_2 Z+w_2 W,\\
\lb WY&=&z_4 Z-z_2 W,\\
\lb YX&=&r X+\theta_1 Z+\theta_2 W.
\end{eqnarray*}

\begin{itemize}
    \item[$\mathcal{AK}:$]Since $\alpha=a=0,$ the family is almost K\"{a}hler if and only if $\theta_1=\theta_2=0,$ giving a $4$-dimensional family
    $\g_{8}^{\mathcal{AK}}(z_2,z_4,w_2,r)$ with
\begin{eqnarray*}
\lb ZY&=&z_2 Z+w_2 W,\\
\lb WY&=&z_4 Z-z_2 W,\\
\lb YX&=&r X.
\end{eqnarray*}
This corresponds to a semidirect product $H_{r}^2\ltimes\R^2$.    
\item[$\mathcal{I}:$]
    We have
    $z_1=z_3=w_1=0,$ so $J$ is integrable if and only if $z_2=z_4+w_2=0,$ giving a family
    $\g_{8}^{\mathcal{I}}(w_2,r,\theta_1,\theta_2)$ with
\begin{eqnarray*}
\lb ZY&=&w_2 W,\\
\lb WY&=&-w_2 Z,\\
\lb YX&=&r X+\theta_1 Z+\theta_2 W.
\end{eqnarray*}
    \item[$\mathcal{K}:$] 
    The family is K\"{a}hler if and only if
    $\theta_1=\theta_2=z_2=z_4+w_2=0.$ 
    We get a $2$-dimensional family
    $\g_{8}^{\mathcal{K}}(w_2,r)$ with
\begin{eqnarray*}
\lb ZY&=&w_2 W,\\
\lb WY&=&-w_2 Z,\\
\lb YX&=&r X.
\end{eqnarray*}
Like $\g_8^{\mathcal{AK}}$, this corresponds to a semidirect product $H_{r}^2\ltimes\R^2$.
\end{itemize}
\end{example}

\begin{example}[$\g_{9}(z_2,z_3,z_4,\theta_1,\theta_2)$]\label{exam-D4}
Here, $$z_1=w_1=w_2=0, \ \ z_3\neq0 \ \ \textrm{and} \ \ r=-2z_2,$$ which gives a 5-dimensional family of \textit{solvable} Lie algebras of the form
\begin{eqnarray*}
\lb ZY&=&z_2 Z,\\
\lb WX&=&z_3 Z,\\
\lb WY&=&z_4 Z-z_2 W,\\
\lb YX&=&-2z_2 X+\theta_1 Z+\theta_2 W.
\end{eqnarray*}

\begin{itemize}
    \item[$\mathcal{AK}:$] Since $\alpha=a=0,$ we see that the family is almost K\"{a}hler if and only if $\theta_1=\theta_2=0,$ giving
    $\g_{9}^{\mathcal{AK}}(z_2,z_3,z_4)$ with
\begin{eqnarray*}
\lb ZY&=&z_2 Z,\\
\lb WX&=&z_3 Z,\\
\lb WY&=&z_4 Z-z_2 W,\\
\lb YX&=&-2z_2 X.
\end{eqnarray*}
We see that this corresponds to a semidirect product $H_{2z_2}^2\ltimes\R^2$.
    \item[$\mathcal{I}:$] From $z_1=w_1=w_2=0,$ 
we find that $J$ is integrable if and only if $2z_2+z_3=z_4=0,$ from which we get $\g_{9}^{\mathcal{I}}(z_2,\theta_1,\theta_2)$ with the relations
\begin{eqnarray*}
\lb ZY&=&z_2 Z,\\
\lb WX&=&-2z_2 Z,\\
\lb WY&=&-z_2 W,\\
\lb YX&=&-2z_2 X+\theta_1 Z+\theta_2 W.
\end{eqnarray*}
    \item[$\mathcal{K}:$] 
    The family is K\"{a}hler if and only if
    $\theta_1=\theta_2=0,$ $z_3=-2z_2$ and $z_4=0.$
    We get a $1$-dimensional family $\g_{9}^{\mathcal{K}}(z_2)$ with
\begin{eqnarray*}
\lb ZY&=&z_2 Z,\\
\lb WX&=&-2z_2 Z,\\
\lb WY&=&-z_2 W,\\
\lb YX&=&-2z_2 X.
\end{eqnarray*}
This corresponds to a semidirect product $H_{2z_2}^2\ltimes\R^2$.
\end{itemize}
\end{example}

\section{Case (E) - ($\lambda=0$, $r=0$ and $\alpha b-a\beta\neq 0$)}

\begin{example}[$\g_{10}(\alpha,a,\beta,b)$]\label{exam-E1}
In this family,
$$z_1=z_2=z_3=z_4=w_1=w_2=\theta_1=\theta_2=0,$$
which gives a 4-dimensional family of \textit{solvable} Lie algebras with the relations
\begin{eqnarray*}
\lb ZX&=&\alpha X+ \beta Y,\\
\lb ZY&=&-\beta X+\alpha Y,\\
\lb WX&=&     a X     +b Y,\\
\lb WY&=&    -b X     +a Y.
\end{eqnarray*}
We see that this family corresponds to a semidirect product $\R^2\ltimes\R^2$.

\begin{itemize}
    \item[$\mathcal{AK}:$]Since $\theta_1=\theta_2=0,$ we see that the family is almost K\"{a}hler if and only if $a=\alpha=0.$ But then the requirement $\alpha b-a\beta\neq0$ is not true, so this family can not be almost K\"{a}hler.
    \item[$\mathcal{I}:$] Since $$z_1=z_2=z_3=z_4=w_1=w_2=0,$$
    we can see that $J$ is always integrable in this family.
    \item[$\mathcal{K}:$]
    Since this family can not be almost K\"{a}hler it can not be in the K\"{a}hler class.
\end{itemize}
\end{example}

\section{Case (F) - ($\lambda=0$, $r=0$ and $\alpha b-a\beta= 0$)}
Here, the classes are divided into disjoint cases parametrized by $\Lambda=(\alpha,a,\beta,b)$, where the variables are zero if and only if they are marked by zero. For instance, if $\Lambda=(\alpha,0,0,0)$, the variable $\alpha$ is non-zero while the variables $a$, $\beta$ and $b$ are zero.
\begin{example}[$\g_{11}(z_1,z_2,z_3,w_1,\theta_1,\theta_2)$]\label{exam-F1}
Here, we let $\Lambda=(0,0,0,0)$ and $z_1\neq 0$, giving a 6-dimensional family of \textit{solvable} Lie algebras with the Lie bracket relations
\begin{eqnarray*}
\lb ZX&=&z_1 Z+w_1 W,\\
\lb ZY&=&z_2 Z+\frac{z_2w_1}{z_1} W,\\
\lb WX&=&z_3 Z-z_1W,\\
\lb WY&=&\frac{z_2z_3}{z_1} Z-z_2 W,\\
\lb YX&=&\theta_1 Z+\theta_2 W.
\end{eqnarray*}

\begin{itemize}
    \item[$\mathcal{AK}:$] 
Since $\alpha=a=0,$ we see that this is in the almost K\"{a}hler class if and only if $\theta_1=\theta_2=0.$ This gives a 4-dimensional family
$\g_{11}^{\mathcal{AK}}(z_1,z_2,z_3,w_1)$
with
\begin{eqnarray*}
\lb ZX&=&z_1 Z+w_1 W,\\
\lb ZY&=&z_2 Z+\frac{z_2w_1}{z_1} W,\\
\lb WX&=&z_3 Z-z_1W,\\
\lb WY&=&\frac{z_2z_3}{z_1} Z-z_2 W.
\end{eqnarray*}
This corresponds to a semidirect product $\R^2\ltimes\R^2$.
    \item[$\mathcal{I}:$] From $$z_1\neq0, \ \ z_4=\frac{z_2 z_3}{z_1} \ \ \text{and} \ \ w_2=\frac{z_2 w_1}{z_1},$$ we see that $J$ is integrable if and only if
$$2z_2+z_3+w_1=0 \ \ \text{and} \ \
2z_1-\frac{z_2z_3}{z_1}-\frac{z_2w_1}{z_1}=0.$$
After multiplying the second equation by $z_1$ and rearranging, we get
$$
z_3+w_1=-2z_2 \ \ \text{and} \ \
2z_1^2-z_2(z_3+w_1)=0.
$$
Now, we plug in $z_3+w_1=-2z_2$ into the second equation and get
$z_1^2+z_2^2=0.$
This is, however, not possible since $z_1$ and $z_2$ are real constants and $z_1\neq0$. Thus, it is not possible for $J$ to be integrable in this family.
    \item[$\mathcal{K}:$] This family can not be K\"{a}hler since it can not be integrable.
\end{itemize}

\end{example}

\begin{example}[$\g_{12}(z_3,w_1,w_2,\theta_1,\theta_2)$]\label{exam-F2}
This family is obtained by letting $\Lambda=(0,0,0,0)$, $z_1=0$ and $w_1\neq 0$, which gives
$$z_2=0\ \ \text{and}\ \ z_4=\frac{z_3w_2}{w_1}$$
and we get a 5-dimensional family of \textit{solvable} Lie algebras with the Lie bracket relations
\begin{eqnarray*}
\lb ZX&=&w_1 W,\\
\lb ZY&=&w_2 W,\\
\lb WX&=&z_3 Z,\\
\lb WY&=&\frac{z_3w_2}{w_1} Z,\\
\lb YX&=&\theta_1 Z+\theta_2 W.
\end{eqnarray*}

\begin{itemize}
    \item[$\mathcal{AK}:$] Since $\alpha=a=0,$ this family is almost K\"{a}hler if and only if $\theta_1=\theta_2=0.$
This is a 3-dimensional family $\g^{\mathcal{AK}}_{12}(z_3,w_1,w_2)$
with the relations
\begin{eqnarray*}
\lb ZX&=&w_1 W,\\
\lb ZY&=&w_2 W,\\
\lb WX&=&z_3 Z,\\
\lb WY&=&\frac{z_3w_2}{w_1} Z.
\end{eqnarray*}
We see that this corresponds to a semidirect product $\R^2\ltimes\R^2$.
    \item[$\mathcal{I}:$] We have
$$z_1=z_2=0, \ \ z_4=\frac{z_3w_2}{w_1} \ \ \text{and} \ \ w_1\neq0.$$ We see that $J$ is integrable if and only if $$z_3+w_1=\frac{z_3w_2}{w_1}+w_2=0,$$
which is true for $z_3=-w_1\neq 0$, giving a 4-dimensional family $\g_{12}^{\mathcal{I}}(w_1,w_2,\theta_1,\theta_2)$
which has the Lie bracket relations
\begin{eqnarray*}
\lb ZX&=&w_1 W,\\
\lb ZY&=&w_2 W,\\
\lb WX&=&-w_1 Z,\\
\lb WY&=&-w_2 Z,\\
\lb YX&=&\theta_1 Z+\theta_2 W.
\end{eqnarray*}
    \item[$\mathcal{K}:$]
    The family is K\"{a}hler if and only if
    $\theta_1=\theta_2=0$ and $z_3=-w_1\neq 0,$
    which gives a $2$-dimensional family
    $\g_{12}^{\mathcal{K}}(w_1,w_2)$ with relations
    \begin{eqnarray*}
\lb ZX&=&w_1 W,\\
\lb ZY&=&w_2 W,\\
\lb WX&=&-w_1 Z,\\
\lb WY&=&-w_2 Z,
\end{eqnarray*}
corresponding to a semidirect product $\R^2\ltimes\R^2$.
\end{itemize}
\end{example}

\begin{example}[$\g_{13}(z_3,z_4,\theta_1,\theta_2)$]\label{exam-F3}
This family is obtained by letting $\Lambda=(0,0,0,0)$,  $z_1=w_1=0$ and $z_3\neq 0$, which gives $z_2=w_2=0$
and we are provided with a 4-dimensional family of \textit{nilpotent} Lie algebras with
\begin{eqnarray*}
\lb WX&=&z_3 Z,\\
\lb WY&=&z_4 Z,\\
\lb YX&=&\theta_1 Z+\theta_2 W.
\end{eqnarray*}

\begin{itemize}
    \item[$\mathcal{AK}:$]We see that, since $\alpha=a=0,$ this family is almost K\"{a}hler if and only if $\theta_1=\theta_2=0$. We get the $2$-dimensional family $\g_{13}^{\mathcal{AK}}(z_3,z_4)$ with
\begin{eqnarray*}
\lb WX&=&z_3 Z,\\
\lb WY&=&z_4 Z.
\end{eqnarray*}
This corresponds to a semidirect product $\R^2\ltimes\R^2$.
    \item[$\mathcal{I}:$] The condition
    $$z_1=z_2=w_1=w_2=0$$ gives that $J$ is integrable if and only if $z_3=z_4=0$. This contradicts $z_3\neq0$. Hence $\g_{13}$ can not be almost K\"{a}hler.
    \item[$\mathcal{K}:$] This family can not be K\"{a}hler.
\end{itemize}
\end{example}

\begin{example}[$\g_{14}(z_2,z_4,w_2,\theta_1,\theta_2)$]\label{exam-F4}
Here, they let $\Lambda=(0,0,0,0)$ and $z_1=z_3=w_1=0$,
providing a 5-dimensional family of \textit{solvable} Lie algebras with the relations
\begin{eqnarray*}
\lb ZY&=&z_2 Z+w_2 W,\\
\lb WY&=&z_4 Z-z_2 W,\\
\lb YX&=&\theta_1 Z+\theta_2 W.
\end{eqnarray*}

\begin{itemize}
    \item[$\mathcal{AK}:$] The condition
    $\alpha=a=0$ implies that this family is almost K\"{a}hler if and only if $\theta_1=\theta_2=0.$
    We get a $3$-dimensional family $\g_{14}^{\mathcal{AK}}(z_2,z_4,w_2)$ given by
\begin{eqnarray*}
\lb ZY&=&z_2 Z+w_2 W,\\
\lb WY&=&z_4 Z-z_2 W.
\end{eqnarray*}
We see that this corresponds to a semidirect product $\R^2\ltimes\R^2$.
    \item[$\mathcal{I}:$] From
    $z_1=z_3=w_1=0$ we see that $J$ is integrable if and only if $z_2=0$ and $z_4+w_2=0.$
    This gives a $3$-dimensional family
    $\g_{14}(w_2,\theta_1,\theta_2)$ with Lie bracket relations
\begin{eqnarray*}
\lb ZY&=&w_2 W,\\
\lb WY&=&-w_2 Z,\\
\lb YX&=&\theta_1 Z+\theta_2 W.
\end{eqnarray*}
    \item[$\mathcal{K}:$] 
    The family is K\"{a}hler if and only if
    $\theta_1=\theta_2=0,$ $z_2=0$ and $z_4+w_2=0.$
    We get a $1$-dimensional family $\g_{14}^{\mathcal{K}}(w_2)$ with
    \begin{eqnarray*}
\lb ZY&=&w_2 W,\\
\lb WY&=&-w_2 Z.
\end{eqnarray*}
This corresponds to a semidirect product $\R^2\ltimes\R^2$.
\end{itemize}
\end{example}

\begin{example}[$\g_{15}(\alpha,w_1,w_2)$]\label{exam-F5}
This is obtained by letting $\Lambda=(\alpha,0,0,0)$, which gives
\begin{equation*}
z_1=z_2=z_3=z_4=\theta_1=\theta_2=0.
\end{equation*}
and we get a 3-dimensional family of \textit{solvable} Lie algebras where
\begin{eqnarray*}
\lb ZX&=&\alpha X+w_1 W,\\
\lb ZY&=&\alpha Y+w_2 W.
\end{eqnarray*}

\begin{itemize}
    \item[$\mathcal{AK}:$]
    Here, $a=\theta_1=\theta_2=0$ and $\alpha\neq0,$ so we see that this family can not be almost K\"{a}hler.
    \item[$\mathcal{I}:$] The condition $z_1=z_2=z_3=z_4=0$ implies that $J$ is integrable if and only if
$w_1=w_2=0.$ We get a 1-dimensional family
$\g_{15}^{\mathcal{I}}(\alpha)$ with
\begin{eqnarray*}
\lb ZX&=&\alpha X,\\
\lb ZY&=&\alpha Y,
\end{eqnarray*}
corresponding to a semidirect product $\R^2\ltimes\R^2$.
    \item[$\mathcal{K}:$] This family can not be K\"{a}hler since it can not be almost K\"{a}hler.
\end{itemize}
\end{example}

\begin{example}[$\g_{16}(\beta,w_1,w_2,\theta_1,\theta_2)$]\label{exam-F6}
Here, they let $\Lambda=(0,0,\beta,0)$, which implies
\begin{equation*}
z_1=z_2=z_3=z_4=0.
\end{equation*}
From this we get a 5-dimensional family of Lie algebras which are \textit{not solvable} in general. They have the Lie bracket relations
\begin{eqnarray*}
\lb ZX&=&\beta Y+w_1 W,\\
\lb ZY&=&-\beta X+w_2 W,\\
\lb YX&=&\theta_1 Z+\theta_2 W.
\end{eqnarray*}

\begin{itemize}
    \item[$\mathcal{AK}:$] We have $\alpha=a=0,$ so this family is almost K\"{a}hler if and only if $\theta_1=\theta_2=0.$
    We get the $3$-dimensional family
    $\g_{16}^{\mathcal{AK}}(\beta,w_1,w_2)$ of \textit{solvable} Lie algebras with
\begin{eqnarray*}
\lb ZX&=&\beta Y+w_1 W,\\
\lb ZY&=&-\beta X+w_2 W.
\end{eqnarray*}
    \item[$\mathcal{I}:$] We have $z_1=z_2=z_3=z_4=0,$ so $J$ is integrable if and only if $w_1=w_2=0,$
    giving a $3$-dimensional family $\g_{16}^{\mathcal{I}}(\beta,\theta_1,\theta_2)$ of Lie algebras which are \textit{not solvable} in general and have the relations
\begin{eqnarray*}
\lb ZX&=&\beta Y,\\
\lb ZY&=&-\beta X,\\
\lb YX&=&\theta_1 Z+\theta_2 W.
\end{eqnarray*}
    \item[$\mathcal{K}:$]
    The family is K\"{a}hler if and only if
    $\theta_1=\theta_2=0$ and $w_1=w_2=0.$
    We get a 1-dimensional family
    $\g_{16}^{\mathcal{K}}(\beta)$ of \textit{solvable} Lie algebras
    with Lie bracket relations
    \begin{eqnarray*}
\lb ZX&=&\beta Y,\\
\lb ZY&=&-\beta X.
\end{eqnarray*}
We see that this corresponds to a semidirect product $\R^2\ltimes\R^2$.
\end{itemize}
\end{example}

\begin{example}[$\g_{17}(\alpha,a,w_1,w_2)$]\label{exam-F7}
For this family, they let $\Lambda=(\alpha,a,0,0)$, which gives
$$
z_1=-\frac {aw_1}\alpha,
\ \ z_2=-\frac {aw_2}\alpha,
\ \ z_3=-\frac {a^2w_1}{\alpha^2},
\ \ z_4=-\frac {a^2w_2}{\alpha^2},\ \
\theta_1=0,\ \ \theta_2=0.
$$
From this, we get a 5-dimensional family of \textit{solvable} Lie algebras given by
\begin{eqnarray*}
\lb ZX&=&\alpha X-\frac {aw_1}\alpha Z+w_1 W,\\
\lb ZY&=&\alpha Y-\frac {aw_2}\alpha Z+w_2 W,\\
\lb WX&=&a X-\frac {a^2w_1}{\alpha^2} Z+\frac {aw_1}\alpha W,\\
\lb WY&=&a Y-\frac {a^2w_2}{\alpha^2} Z+\frac {aw_2}\alpha W.
\end{eqnarray*}

\begin{itemize}
    \item[$\mathcal{AK}:$] Since $\theta_1=\theta_2=0,$  $\alpha\neq0$ and $a\neq0,$ this family can not be almost K\"{a}hler.
    \item[$\mathcal{I}:$] From
    $$z_1=-\frac {aw_1}\alpha, \ \ z_2=-\frac {aw_2}\alpha, \ \ z_3=-\frac {a^2w_1}{\alpha^2} \ \ \text{and} \ \ z_4=-\frac {a^2w_2}{\alpha^2}$$ 
    we see that the family is integrable if and only if
$$-\frac{2aw_2}{\alpha}-\frac {a^2w_1}{\alpha^2}+w_1=0  \ \ \text{and} \ \ -\frac{2aw_1}{\alpha}+\frac {a^2w_2}{\alpha^2}-w_2=0.$$
This can be rewritten as
\begin{equation}\label{eqg17}
\left(\alpha^2- {a^2}\right)w_1=2a\alpha w_2 \ \ \text{and} \ \ \left(\alpha^2- {a^2}\right)w_2=-2a\alpha w_1.
\end{equation}
Since $2a\alpha\neq0$, we can divide and  get
\begin{equation*}
    w_2=\frac{\alpha^2-a^2}{2a\alpha}w_1
    =-\frac{(\alpha^2-a^2)^2}{4a^2\alpha^2}w_2.
\end{equation*}
Suppose that $w_2\neq0$. Then
$$4a^2\alpha^2=-(\alpha^2-a^2)^2,$$
which can be rewritten as
$$(\alpha^2+a^2)^2=0.$$
This has no nonzero, real solution. Thus, $J$ is integrable if and only if $w_1=w_2=0$. We get a $2$-dimensional family $\g_{17}^{\mathcal{I}}(\alpha,a)$ with
\begin{eqnarray*}
\lb ZX&=&\alpha X,\\
\lb ZY&=&\alpha Y,\\
\lb WX&=&a X,\\
\lb WY&=&a Y.
\end{eqnarray*}
This corresponds to a semidirect product $\R^2\ltimes\R^2$.
    \item[$\mathcal{K}:$] The family can not be K\"{a}hler since it can not be almost K\"{a}hler.
\end{itemize}
\end{example}

\begin{example}[$\g_{18}(\beta,b,z_3,z_4,\theta_1,\theta_2)$]\label{exam-F8}
Here $\Lambda=(0,0,\beta,b)$, which gives
$$z_1=\frac {\beta z_3}b,
\ \ z_2=\frac {\beta z_4}b,
\ \ w_1=-\frac {\beta^2z_3}{b^2},
\ \ w_2=-\frac {\beta^2z_4}{b^2}.
$$
We get a 6-dimensional family of Lie algebras which are \textit{not solvable} in general given by
\begin{eqnarray*}
\lb ZX&=& \beta Y+\frac {\beta z_3}b Z-\frac {\beta^2z_3}{b^2} W,\\
\lb ZY&=&-\beta X+\frac {\beta z_4}b Z-\frac {\beta^2z_4}{b^2} W,\\
\lb WX&=& b Y+z_3 Z-\frac {\beta z_3}b W,\\
\lb WY&=&-b X+z_4 Z-\frac {\beta z_4}b W,\\
\lb YX&=&\theta_1 Z+\theta_2 W.
\end{eqnarray*}

\begin{itemize}
    \item[$\mathcal{AK}:$]
    Since $a=\alpha=0,$ this family is almost K\"{a}hler if and only if $\theta_1=\theta_2=0.$
    We get a $4$-dimensional family
    $\g_{18}^\mathcal{AK}(\beta,b,z_3,z_4)$ of \textit{solvable} Lie algebras with
\begin{eqnarray*}
\lb ZX&=& \beta Y+\frac {\beta z_3}b Z-\frac {\beta^2z_3}{b^2} W,\\
\lb ZY&=&-\beta X+\frac {\beta z_4}b Z-\frac {\beta^2z_4}{b^2} W,\\
\lb WX&=& b Y+z_3 Z-\frac {\beta z_3}b W,\\
\lb WY&=&-b X+z_4 Z-\frac {\beta z_4}b W.
\end{eqnarray*}
    \item[$\mathcal{I}:$] Here, $$z_1=\frac {\beta z_3}b, \ \ z_2=\frac {\beta z_4}b, \ \ w_1=-\frac {\beta^2z_3}{b^2} \ \ \text{and} \ \ w_2=-\frac {\beta^2z_4}{b^2}$$ and we can easily see that $J$ is integrable if and only if
$$
(\beta^2-b^2)z_3=2\beta bz_4 \ \ \text{and} \ \ (\beta^2-b^2)z_4=-2\beta bz_3.
$$
We can immediately see that this has 
the same form as equation \eqref{eqg17}. Thus, $J$ is integrable if and only if $z_3=z_4=0.$
We get the $4$-dimensional family
$\g_{18}^\mathcal{I}(\beta,b,\theta_1,\theta_2)$ of Lie algebras which are \textit{not solvable} in general and have the Lie bracket relations
\begin{eqnarray*}
\lb ZX&=& \beta Y,\\
\lb ZY&=&-\beta X,\\
\lb WX&=& b Y,\\
\lb WY&=&-b X,\\
\lb YX&=&\theta_1 Z+\theta_2 W.
\end{eqnarray*}
    \item[$\mathcal{K}:$]
    The family is K\"{a}hler if and only if
    $\theta_1=\theta_2=0,$ and $z_3=z_4=0.$
    From this, we get a 2-dimensional family $\g_{18}^\mathcal{K}(\beta,b)$ of \textit{solvable} Lie algebras with
\begin{eqnarray*}
\lb ZX&=& \beta Y,\\
\lb ZY&=&-\beta X,\\
\lb WX&=& b Y,\\
\lb WY&=&-b X.
\end{eqnarray*}
We see that this corresponds to a semidirect product $\R^2\ltimes\R^2$.
\end{itemize}
\end{example}

\begin{example}[$\g_{19}(\alpha,\beta,w_1,w_2)$]\label{exam-F9}
Here $\Lambda=(\alpha,0,\beta,0)$, which gives
$$z_1=z_2=z_3=z_4=\theta_1=\theta_2=0.$$
and we get a 4-dimensional family of \textit{solvable} Lie algebras where
\begin{eqnarray*}
\lb ZX&=&\alpha X +\beta Y+w_1 W,\\
\lb ZY&=&-\beta X+\alpha Y+w_2 W.
\end{eqnarray*}
\begin{itemize}
    \item[$\mathcal{AK}:$] This family can not be almost K\"{a}hler since $a=\theta_1=\theta_2=0$ and $\alpha\neq0.$
    \item[$\mathcal{I}:$] Since $z_1=z_2=z_3=z_4=0,$ $J$ is integrable if and only if $w_1=w_2=0.$
    We get the $2$-dimensional family  $\g_{19}^\mathcal{I}(\alpha,\beta)$
    given by
\begin{eqnarray*}
\lb ZX&=&\alpha X +\beta Y,\\
\lb ZY&=&-\beta X+\alpha Y.
\end{eqnarray*}
This corresponds to a semidirect product $\R^2\ltimes\R^2$.
    \item[$\mathcal{K}:$] Since this family can not be almost K\"{a}hler, it can not be K\"{a}hler.
\end{itemize}
\end{example}

\begin{example}[$\g_{20}(\alpha,a,\beta,w_1,w_2)$]\label{exam-F10}
Here they let $\Lambda=(\alpha,a,\beta,b)$, which gives
$$
z_1=-\frac {aw_1}\alpha,
\ \ z_2=-\frac {aw_2}\alpha,
\ \ z_3=-\frac {a^2w_1}{\alpha^2},
\ \ z_4=-\frac {a^2w_2}{\alpha^2},$$
$$b=\frac{\beta a}\alpha,\ \ \theta_1=0\ \ \textrm{and} \ \ \theta_2=0.$$
We get a 5-dimensional family of \textit{solvable} Lie algebras with
\begin{eqnarray*}
\lb ZX&=&\alpha X+\beta Y-\frac {aw_1}\alpha Z+w_1 W,\\
\lb ZY&=&-\beta X+\alpha Y-\frac {aw_2}\alpha Z+w_2 W,\\
\lb WX&=&a X+\frac{\beta a}\alpha Y-\frac {a^2w_1}{\alpha^2} Z+\frac a\alpha w_1 W,\\
\lb WY&=&-\frac{\beta a}\alpha X+a Y-\frac {a^2w_2}{\alpha^2} Z+\frac a\alpha w_2 W.
\end{eqnarray*}

\begin{itemize}
    \item[$\mathcal{AK}:$] Since $\theta_1=\theta_2=0,$ $\alpha\neq0$ and $a\neq0,$ this family can not be almost K\"{a}hler.
    \item[$\mathcal{I}:$] The constants $z_1,\dots,z_4,$ $w_1$ and $w_2$ are the same as for the family $\g_{17}$. Thus $J$ is integrable if and only if $w_1=w_2=0$.
    This gives a $3$-dimensional family $\g_{20}^\mathcal{AK}(\alpha,a,\beta)$ with
\begin{eqnarray*}
\lb ZX&=&\alpha X+\beta Y,\\
\lb ZY&=&-\beta X+\alpha Y,\\
\lb WX&=&a X+\frac{\beta a}\alpha Y,\\
\lb WY&=&-\frac{\beta a}\alpha X+a Y.
\end{eqnarray*}
We see that this corresponds to a semidirect product $\R^2\ltimes\R^2$.
    \item[$\mathcal{K}:$] The family can not be K\"{a}hler since it can not be almost K\"{a}hler.
\end{itemize}
\end{example}

\phantomsection   
\addcontentsline{toc}{chapter}{Bibliography} 

\backcover
\end{document}